\def \C {{\mathbb C}}
\def \N {{\mathbb N}}
\def \R {{\mathbb R}}
\def \d {\,{\rm d}}
\def\re{{\Re e\,}}
\def\im{{\Im m\,}}
\def\e{{\rm e}}
\def\i{{\rm i}}
\def\dm{\frac{1}{2}}
\def \sset {{\smallsetminus }}
\def\sumb{\mathop{{\sum}^{\flat}}}
\def\le{\leqslant}
\def\leq{\leqslant}
\def\ge{\geqslant}
\def\geq{\geqslant}
\theoremstyle{plain}
\newtheorem{theorem}{Theorem}
\newtheorem{lemma}{Lemma}[section]
\newtheorem{proposition}{Proposition}
\theoremstyle{remark}
\newtheorem{remark}{Remark}
\theoremstyle{definition}
\numberwithin{equation}{section}
\begin{document}

\vskip 5mm

\title[Fourier coefficients of cusp forms of half-integral weight]
{Non-vanishing and sign changes of
Hecke eigenvalues for half-integral weight cusp forms} 
\author{B. Chen and J. Wu}

\address{
Bin Chen
\\
Department of mathematics
\\
Tongji University
\\
Shanghai 200092
\\
China.
Department of mathematics
\\
Weinan Normal University
\\
Weinan 714000
\\
China.
}
\email{13tjccbb@tongji.edu.cn}

\address{%
Jie Wu
\\
School of Mathematics
\\
Shandong University
\\
Jinan, Shandong 250100
\\
China.
CNRS\\
Institut \'Elie Cartan de Lorraine\\
UMR 7502\\
54506 Van\-d\oe uvre-l\`es-Nancy\\
France}
\curraddr{%
Universit\'e de Lorraine\\
Institut \'Elie Cartan de Lorraine\\
UMR 7502\\
54506 Van\-d\oe uvre-l\`es-Nancy\\
France
}
\email{jie.wu@univ-lorraine.fr}

\date{\today}

\subjclass[2000]{11F37, 11F30, 11N25}
\keywords{Forms of half-integer weight,
Fourier coefficients of automorphic forms,
exponential sums,
$\mathscr{B}$-free numbers}

\maketitle

{\hfill \textit{In honor of J. G. van der Corput's 125th birthday}}

\begin{abstract} 
In this paper, we consider three problems about signs of the Fourier coefficients of 
a cusp form $\mathfrak{f}$ with half-integral weight:
\begin{itemize}
\item[--]
The first negative coefficient of the sequence $\{\mathfrak{a}_{\mathfrak{f}}(tn^2)\}_{n\in \N}$,
\item[--]
The number of coefficients $\mathfrak{a}_{\mathfrak{f}}(tn^2)$ of same signs,
\item[--]
Non-vanishing of coefficients $\mathfrak{a}_{\mathfrak{f}}(tn^2)$ in short intervals and in arithmetic progressions,
\end{itemize}
where $\mathfrak{a}_{\mathfrak{f}}(n)$ is the $n$-th Fourier coefficient of $\mathfrak{f}$ and $t$ is a square-free integer
such that $\mathfrak{a}_{\mathfrak{f}}(t)\not=0$.

\end{abstract}

\addtocounter{footnote}{1}

\vskip 10mm

\section{Introduction}

Throughout we let $k\ge 1$ be an integer and assume $N\ge 4$ to be divisible by 4. 
Fix a real Dirichlet character $\chi$ modulo $N$. 
We write $\mathcal{S}_{k+1/2}(N, \chi)$ for the space of cusp forms of weight $k+1/2$ 
for the group $\Gamma_0(N)$ with character $\chi$. 
The space $\mathcal{S}_{3/2}(N,\chi)$  contains unary theta functions. 
Let $\mathcal{S}_{3/2}^*(N,\chi)$ be the orthogonal complement with respect to the Petersson scalar product of the subspace generated by these theta functions (\cite[Section 4]{Shimura1973} and \cite[Section 4]{Cipra1983}). 
For convenience, we put $\mathcal{S}_{k+1/2}^*(N, \chi)= \mathcal{S}_{k+1/2}(N,\chi)$ when $k\ge 2$. 
Each ${\mathfrak{f}}\in \mathcal{S}_{k+1/2}^*(N, \chi)$ has a Fourier expansion
\begin{equation}\label{deffrakfz}
{\mathfrak{f}}(z)
= \sum_{n\ge 1} {\mathfrak a}_{\mathfrak{f}}(n) \e^{2\pi\i nz} 
\qquad(\im z>0).
\end{equation}
Let ${\mathfrak{f}}\in \mathcal{S}_{k+1/2}^*(N, \chi_0)$ be a cusp form with trivial character $\chi_0$,
square-free level and real coefficients ${\mathfrak a}_{\mathfrak{f}}(n)$. 
Suppose that ${\mathfrak{f}}$ lies in the plus space,
that is, ${\mathfrak a}_{\mathfrak{f}}(n)=0$ when $(-1)^k n \equiv 2, 3\,({\rm mod}\,4)$,
see \cite{KohnenZagier1981, Kohnen1985}.
Bruinier and Kohnen \cite{BK08} gave the conjectures
\begin{equation}\label{ConjectureBK1}
\lim_{x\to\infty}
\frac{|\{n\le x : {\mathfrak a}_{\mathfrak{f}}(n)\gtrless\,0\}|}{|\{n\le x : {\mathfrak a}_{\mathfrak{f}}(n)\not=0\}|}
= \frac{1}{2}
\end{equation}
and
\begin{equation}\label{ConjectureBK2}
\lim_{x\to\infty}
\frac{|\{|d|\le x : d \; \hbox{fundamental discriminant}, \, {\mathfrak a}_{\mathfrak{f}}(|d|)\gtrless\,0\}|}
{|\{|d|\le x : d \; \hbox{fundamental discriminant}, \, {\mathfrak a}_{\mathfrak{f}}(|d|)\not=0\}|}
= \frac{1}{2}
\end{equation}
with empirical evidence, which may be, however, out of present reach. 
Alternatively, they considered the sign changes of ${\mathfrak a}_{\mathfrak{f}}(n)$ 
when $n$ runs over specific sets of integers, such as 
$\{tn^2\}_{n\in \N}$, $\{tp^{2\nu}\}_{\nu\in \N}$ and $\{tn_t^2\}_{t\,\text{square-free}}$.
Here $t$ is a positive square-free integer such that ${\mathfrak a}_{\mathfrak{f}}(t)\not=0$, 
$p$ denotes any fixed prime
and $n_t$ is an integer determined by $t$ (cf. \cite[Theorems 2.1 and 2.2]{BK08}).
In particular, they proved that the sequence $\{\mathfrak{a}_{\mathfrak{f}}(tn^2)\}_{n\in \N}$ for a fixed square-free $t$ 
has infinitely many sign changes.
Recently, Kohnen, Lau
and Wu \cite{KohnenLauWu2013} have proved some quantitative results on the number of sign changes in
this sequence.

In this paper, we shall consider the following problems:
\begin{itemize}
\item
The first negative coefficient of the sequence $\{\mathfrak{a}_{\mathfrak{f}}(tn^2)\}_{n\in \N}$;
\item
The number of coefficients $\mathfrak{a}_{\mathfrak{f}}(tn^2)$ of same signs;
\item
Non-vanishing of coefficients $\mathfrak{a}_{\mathfrak{f}}(tn^2)$ in short intervals and in arithmetic progressions.
\end{itemize}

\vskip 1mm

Denote by $n_{\mathfrak{f}}$ the smallest integer $n$ such that
\begin{equation}\label{defnf}
{\mathfrak a}_{\mathfrak{f}}(tn^2)<0
\qquad\text{and}\qquad
(n, N/2)=1.
\end{equation}
The first aim of this paper is to give an upper bound of $n_{\mathfrak{f}}$.
By using the Shimura lift of ${\mathfrak{f}}$ \cite{Shimura1973}
and developing the method of \cite{KLSW2010, LauLiuWu2012},
we can prove the following result.
 
\begin{theorem}\label{thm1}
Let $k\ge 1$ be an integer, $N\ge 4$ an integer divisible by $4$
and $\chi$ be a real Dirichlet character modulo $N$.
Suppose that ${\mathfrak{f}}\in \mathcal{S}_{k+1/2}^*(N, \chi)$ is a Hecke eigenform and 
$t\ge 1$ is a square-free integer such that ${\mathfrak a}_{\mathfrak{f}}(t)>0$.
Assume that its Shimura lift is not of CM type.
In the above notation, we have
$$
n_{\mathfrak{f}}\ll (k^2N)^{9/20},
$$
where the implied constant is absolute.
\end{theorem}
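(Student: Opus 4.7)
The plan is to transfer the problem, via the Shimura correspondence, to the analogous first-sign-change statement for Hecke eigenvalues of an integral-weight cusp form, and then to execute the hybrid prime-sum argument of \cite{KLSW2010, LauLiuWu2012}. First I would set $F$ to be the Shimura lift of $\mathfrak{f}$ with respect to $t$; by hypothesis this is a non-CM normalized Hecke eigenform of integral weight $2k$ on $\Gamma_{0}(N/2)$. Writing $\lambda_{F}(n)$ for its normalized Hecke eigenvalues, Shimura's identity
$$\sum_{n\ge 1}\frac{{\mathfrak a}_{\mathfrak{f}}(tn^{2})}{{\mathfrak a}_{\mathfrak{f}}(t)}\,n^{-s}=\frac{L(s,F)}{L(s-k+1,\psi)},$$
with $\psi$ a real Dirichlet character built from $\chi$, $t$ and $(-1)^{k}$, becomes after Dirichlet convolution the multiplicative formula
$$\frac{{\mathfrak a}_{\mathfrak{f}}(tn^{2})}{{\mathfrak a}_{\mathfrak{f}}(t)\,n^{k-1/2}}=\sum_{d\mid n}\frac{\mu(d)\psi(d)}{\sqrt d}\,\lambda_{F}(n/d)\qquad((n,N)=1).$$
Since ${\mathfrak a}_{\mathfrak{f}}(t)>0$, the signs of ${\mathfrak a}_{\mathfrak{f}}(tn^{2})$ are then controlled by those of $\lambda_{F}(n)$, up to a convolution tail of relative size $O(n^{-1/2})$.

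Next, I would argue by contradiction. Set $x:=C(k^{2}N)^{9/20}$ for a large absolute constant $C$ and assume $n_{\mathfrak{f}}>x$, so that ${\mathfrak a}_{\mathfrak{f}}(tn^{2})\ge 0$ for every $n\le x$ with $(n,N)=1$ (which coincides with $(n,N/2)=1$ since $4\mid N$). Evaluating the identity at $n=p$ and at $n=p^{2}$ for a prime $p\le\sqrt x$, $p\nmid N$, yields $\lambda_{F}(p)\ge\psi(p)p^{-1/2}$ and $\lambda_{F}(p^{2})\ge\psi(p)p^{-1/2}\lambda_{F}(p)$; combined with the Hecke relation $\lambda_{F}(p^{2})=\lambda_{F}(p)^{2}-1$ this forces $\lambda_{F}(p)\ge 1-O(p^{-1/2})$ for every such prime, and therefore
$$\sum_{\stacksum{p\le\sqrt x}{p\nmid N}}\lambda_{F}(p)\;\gg\;\frac{\sqrt x}{\log x}.$$

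Finally, an opposing upper bound for the same prime sum delivers the contradiction, and this is the heart of the argument. By the hybrid method of \cite{KLSW2010, LauLiuWu2012}---Perron's formula, contour shift inside the critical strip, convexity of $L(s,F)$ in both the $k$- and the $N$-aspect, and Rankin--Selberg control of $\sum_{n\le y}\lambda_{F}(n)^{2}$, which rests on the holomorphy of $L(s,\syms F)$ on $\mathrm{Re}\,s=1$ secured by the non-CM hypothesis---one obtains an estimate of the shape
$$\sum_{p\le y}\lambda_{F}(p)\;\ll\; y^{1/2}(k^{2}N)^{1/20+\varepsilon}+y^{1-\delta}$$
for some fixed $\delta>0$. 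Specialising $y=\sqrt x\asymp(k^{2}N)^{9/40}$ and choosing $C$ large enough, this contradicts the lower bound above, forcing $n_{\mathfrak{f}}\le x$ as required. The main obstacle is precisely this last analytic step: one must make the convexity and Rankin--Selberg estimates for $L(s,F)$ and $L(s,\syms F)$ uniform in the three parameters $y,k,N$ with exactly the hybrid exponent $9/20$; every other step is essentially a formal manipulation of the Shimura correspondence and the Hecke relations, and the non-CM assumption enters only here.
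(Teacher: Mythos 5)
Your use of the Shimura correspondence and the observation that positivity of $\mathfrak{a}_{\mathfrak{f}}(tp^{2})$ and $\mathfrak{a}_{\mathfrak{f}}(tp^{4})$ forces $\lambda_{F}(p)\ge 1-O(p^{-1/2})$ for primes $p\le\sqrt{x}$ coincide with the paper's setup (the paper phrases the latter via the angles $\theta_f(p)$). The fatal gap is your final analytic step: the claimed unconditional estimate $\sum_{p\le y}\lambda_{F}(p)\ll y^{1/2}(k^{2}N)^{1/20+\varepsilon}+y^{1-\delta}$ for a \emph{prime} sum, uniformly in the conductor and in the range $y\asymp(k^{2}N)^{9/40}$, is not a consequence of convexity plus Rankin--Selberg, and it is not what \cite{KLSW2010,LauLiuWu2012} prove. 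Perron's formula and convexity control sums of $\lambda_F(n)$ over \emph{all} integers $n\le y$; extracting cancellation over primes requires zero-free-region or zero-density input for $L(s,F)$, which unconditionally gives only logarithmic-type savings with constants that degrade in the conductor aspect, and certainly no power saving when $y$ is a small fixed power of $k^{2}N$. The non-CM hypothesis and holomorphy of $L(s,{\rm Sym}^{2}F)$ on ${\rm Re}\,s=1$ do not repair this. Consequently the contradiction cannot be closed as described, and the exponent $9/20$ is inserted by hand rather than produced by the argument.

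The paper avoids prime sums altogether. It studies $S_{\mathfrak{f}}(x)=\sum^{\flat}_{n\le x}\mathfrak{a}^{*}_{\mathfrak{f}}(n)\log(x/n)$, a log-weighted sum over square-free integers coprime to $N_k$, bounded above by $(k^{2}N)^{1/4+\varepsilon}x^{1/2}$ via Perron and the convexity bound for $L^{\flat}(s,\mathfrak{a}^{*}_{\mathfrak{f}})$ (obtained by factoring $L^{\flat}(s,f)=L^{\flat}(s+\tfrac12,\chi_{t,N})L^{\flat}(s,\mathfrak{a}^{*}_{\mathfrak{f}})\mathscr{L}_f(s)$), and bounded below by comparing $\mathfrak{a}^{*}_{\mathfrak{f}}$ with an explicit minorant multiplicative function $h_{N_k,y_{\mathfrak{f}}}$: writing $\mathfrak{a}^{*}_{\mathfrak{f}}\mu^{2}=g*h_{N_k,y_{\mathfrak{f}}}$, positivity up to $y_{\mathfrak{f}}$ makes $g$ nonnegative at primes, and the mean value of $h_{N_k,y_{\mathfrak{f}}}$ is evaluated through the smooth-number asymptotics of Lemma 3.1, yielding $S_{\mathfrak{f}}(y_{\mathfrak{f}}^{u})\gg y_{\mathfrak{f}}^{u}(\log_2(kN))^{-1}$ precisely for $u<\kappa$, where $\kappa>\tfrac{10}{9}$ solves $\rho(2\kappa)=2\log\kappa$ with Dickman's function $\rho$. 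Comparing the two bounds gives $y_{\mathfrak{f}}\ll(k^{2}N)^{1/(2u)+\varepsilon}$, and taking $u$ close to $\tfrac{10}{9}$ is exactly where $9/20$ comes from. To salvage your route you would have to replace the prime-sum estimate by such a full-sum (friable-integer) argument; no known unconditional bound on $\sum_{p\le y}\lambda_{F}(p)$ suffices.
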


\begin{remark}
The exponent $\tfrac{9}{20}$ can be reduced to $\tfrac{3}{8}$ if we do more numerical computation as in \cite{Matomaki2012}.
\end{remark}

In order to study the number of coefficients $\mathfrak{a}_{\mathfrak{f}}(tn^2)$ of same signs,
we introduce the counting functions:
\begin{equation}\label{defNF*pmx}
\begin{cases}
\hskip 0,5mm
{\mathcal N}_{\mathfrak{f}}^{*}(x)
:= \displaystyle \sum_{\substack{n\leq x\\ {\mathfrak a}_{\mathfrak{f}}(tn^2)\neq0}} 1,
\\\noalign{\vskip 1mm}
{\mathcal N}_{\mathfrak{f}}^{+}(x)
:= \displaystyle \sum_{\substack{n\leq x\\ {\mathfrak a}_{\mathfrak{f}}(tn^2)>0}} 1,
\\\noalign{\vskip 1mm}
{\mathcal N}_{\mathfrak{f}}^{-}(x)
:= \displaystyle \sum_{\substack{n\leq x\\ {\mathfrak a}_{\mathfrak{f}}(tn^2)<0}} 1.
\end{cases}
\end{equation}
By using the method of \cite{MatomakiRadzwill2014} based on multiplicative function theory, 
we establish the following result. 

\goodbreak

\begin{theorem}\label{thm2}
Let $k\ge 1$ be an integer, $N\ge 4$ an integer divisible by $4$
and $\chi$ be a real Dirichlet character modulo $N$.
Suppose that ${\mathfrak{f}}\in \mathcal{S}_{k+1/2}^*(N, \chi)$ is a Hecke eigenform and 
$t\ge 1$ is a square-free integer such that ${\mathfrak a}_{\mathfrak{f}}(t)\not=0$.
Assume that its Shimura lift is not of CM type.
\par
{\rm (i)}
For any $\varepsilon>0$, we have
\begin{equation}\label{NonVanishing}
{\mathcal N}_{\mathfrak{f}}^{*}(x)
= \rho_{\mathfrak{f}} \, x 
\left\{1 + O_{{\mathfrak{f}}, \varepsilon}\left((\log x)^{-1/4+\varepsilon}\right)\right\}
\end{equation}
for $x\to\infty$, 
where $\delta_{\mathfrak{f}}(n)$ is the characteristic function of the integers $n$ 
such that ${\mathfrak a}_{\mathfrak{f}}(tn^2)\neq 0$ and

\begin{equation}\label{defrho}
\rho_{\mathfrak{f}} 
:= \prod_{p}\big(1-p^{-1}\big)\sum_{\nu\geq 0} \delta_{\mathfrak{f}}(p^{\nu}) p^{-\nu}>0.
\end{equation}
\par
{\rm (ii)}
For $x\to\infty$, we have
\begin{equation}\label{LowerNpmShort}
{\mathcal N}_{\mathfrak{f}}^{\pm}(x)
= \tfrac{1}{2} \rho_{\mathfrak{f}} \, x \left\{1 + O_{\mathfrak{f}, \varepsilon}\left((\log x)^{-1/4+\varepsilon}\right)\right\}.
\end{equation}
Here the implied constants depend on $\mathfrak{f}$ and $\varepsilon$.
\end{theorem}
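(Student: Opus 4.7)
The plan is to pass through the Shimura lift of $\mathfrak{f}$, reducing the problem to a mean-value question about a bounded real multiplicative function, and then to combine a Selberg--Delange style argument for part~(i) with the Hal\'asz--Matom\"aki--Radziwi{\l}{\l} machinery of \cite{MatomakiRadzwill2014} for part~(ii). Write $F$ for the Shimura lift of $\mathfrak{f}$, a non-CM Hecke eigenform of integer weight $2k$ with Hecke eigenvalues $\lambda_F(n)$; for every prime $p$ outside a finite exceptional set, Shimura's relations give
\begin{equation*}
\mathfrak{a}_{\mathfrak{f}}(tp^{2\nu+2}) = \lambda_F(p)\,\mathfrak{a}_{\mathfrak{f}}(tp^{2\nu}) - \chi'(p)\,p^{2k-1}\,\mathfrak{a}_{\mathfrak{f}}(tp^{2\nu-2}),
\end{equation*}
with $\chi'$ an explicit Dirichlet character. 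The normalized function
$A(n):=\mathfrak{a}_{\mathfrak{f}}(tn^2)\big/\bigl(\mathfrak{a}_{\mathfrak{f}}(t)\,n^{k-1/2}\bigr)$
is then real, multiplicative, and, by Deligne's bound, satisfies $|A(n)|\ll_\varepsilon n^{\varepsilon}$; crucially, its sign coincides with that of $\mathfrak{a}_{\mathfrak{f}}(tn^2)/\mathfrak{a}_{\mathfrak{f}}(t)$.

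For part~(i) the indicator $\delta_{\mathfrak{f}}$ is multiplicative. Since $A(p)=p^{-k+1/2}\bigl(\lambda_F(p)-\chi'(p)p^{k-1}\bigr)$, Deligne's bound rules out $A(p)=0$ for all $p\ge 5$ when $k\ge 2$, while for $k=1$ Serre's density theorem combined with the non-CM hypothesis forces $\delta_{\mathfrak{f}}(p)=1$ on a density-one set of primes. Consequently $\sum_n \delta_{\mathfrak{f}}(n)n^{-s}=\zeta(s)H(s)$ with $H$ holomorphic and bounded in some half-plane $\Re s>1-\eta$, and a Selberg--Delange (or Wirsing) argument yields
\begin{equation*}
\mathcal{N}_{\mathfrak{f}}^*(x)=\rho_{\mathfrak{f}}\,x+O_{\mathfrak{f},\varepsilon}\bigl(x(\log x)^{-1/4+\varepsilon}\bigr),
\end{equation*}
with $\rho_{\mathfrak{f}}>0$ given by the Euler product \eqref{defrho}.

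For part~(ii) introduce the multiplicative sign function $g(n):=\operatorname{sgn}(A(n))\in\{-1,0,+1\}$, so that $\mathcal{N}_{\mathfrak{f}}^+(x)-\mathcal{N}_{\mathfrak{f}}^-(x)=\operatorname{sgn}(\mathfrak{a}_{\mathfrak{f}}(t))\sum_{n\le x} g(n)$. Together with (i) this reduces (ii) to establishing the cancellation
\begin{equation*}
\sum_{n\le x} g(n)=O_{\mathfrak{f},\varepsilon}\bigl(x(\log x)^{-1/4+\varepsilon}\bigr),
\end{equation*}
which is precisely the content of the quantitative Hal\'asz--Matom\"aki--Radziwi{\l}{\l} theorem of \cite{MatomakiRadzwill2014} applied to the bounded real multiplicative function $g$, provided that $g$ is \emph{non-pretentious}, in the sense that the Hal\'asz distance
\begin{equation*}
\mathbb{D}(g,\chi n^{it};x)^2 := \sum_{p\le x}\frac{1-\Re\bigl(g(p)\overline{\chi(p)}\,p^{-it}\bigr)}{p}
\end{equation*}
tends to infinity uniformly over Dirichlet characters $\chi$ of bounded conductor and real $t$ with $|t|\le(\log x)^{A}$.

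The main obstacle is precisely this non-pretentiousness check. For $p$ outside a finite set we have $g(p)=\operatorname{sgn}(\lambda_F(p))$, so the task is to show that the signs of the Hecke eigenvalues of $F$ do not correlate with any twist $\chi(p)p^{it}$. The mechanism is familiar from the integer-weight case treated in \cite{MatomakiRadzwill2014}: since $g(p)^2=\delta_{\mathfrak{f}}(p)$ equals $1$ on a density-one set, one has $\sum_{p\le x} g(p)^2/p=\log\log x+O(1)$, while Rankin--Selberg estimates for the symmetric square $L$-function of $F$ (an effective substitute for Sato--Tate) force $\sum_{p\le x} g(p)\chi(p)p^{-it}/p=O(1)$ for each fixed $\chi$ and $t$. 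Together these give $\mathbb{D}(g,\chi n^{it};x)^2\gg\log\log x$, which when fed into Hal\'asz's inequality delivers the power saving $(\log x)^{-1/4+\varepsilon}$ and completes the proof of~(ii).
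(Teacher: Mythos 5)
Your overall strategy (pass to the Shimura lift, treat $\delta_{\mathfrak{f}}$ and the sign function as bounded multiplicative functions, and apply mean-value theorems) is the same as the paper's, but two of your key steps do not hold as stated. For part (i), the claim that Deligne's bound rules out $\mathfrak{a}_{\mathfrak{f}}(tp^2)=0$ for $k\ge 2$ is false: in normalized form the vanishing condition reads $\lambda_f(p)=\chi_{t,N}(p)/\sqrt{p}$ (see \eqref{Coefficient_Convolution}), and $|\chi_{t,N}(p)/\sqrt{p}|\le 1/\sqrt{p}<2$ is well inside the Deligne range, so no pointwise argument excludes it for any $k$. The correct input, used by the paper for all $k$, is Serre's quantitative Chebotarev bound (Lemma \ref{lem2.3}): since $\mathfrak{a}_{\mathfrak{f}}(tp^2)=0$ is equivalent to $a_{f_t}(p)=\chi_{t,N}(p)\mathfrak{a}_{\mathfrak{f}}(t)p^{k-1}$, the exceptional primes number $\ll_{\mathfrak{f}} x/(\log x)^{5/4-\varepsilon}$, whence $\sum_{p\le z}\delta_{\mathfrak{f}}(p)\log p=z+O\big(z(\log z)^{-1/4+\varepsilon}\big)$. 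Moreover, your deduction ``$\sum_n\delta_{\mathfrak{f}}(n)n^{-s}=\zeta(s)H(s)$ with $H$ holomorphic and bounded in $\Re s>1-\eta$'' does not follow from such a density statement: a saving of a power of $\log$ in the count of exceptional primes gives no analytic continuation of $\prod_{\delta_{\mathfrak{f}}(p)=0}(1-p^{-s})$ past $\Re s=1$, so Selberg--Delange is not applicable. One needs a Wirsing/Hal\'asz-type mean value theorem whose hypothesis is exactly the prime sum condition above; the paper uses the theorem of Liu--Wu (\eqref{TWcondition1}--\eqref{AsymptoticSfx}), and this is where the exponent $1/4=5/4-1$ comes from.

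For part (ii), the reduction to bounding $\sum_{n\le x}g(n)$ is fine, but the non-pretentiousness verification is precisely the hard point and your justification is insufficient: ``Rankin--Selberg estimates for the symmetric square'' control $\sum_p\lambda_f(p)^2/p$ and $\sum_p\lambda_f(p^2)\chi(p)p^{-it}/p$, not the sign $g(p)=\mathrm{sign}\,\lambda_f(p)$, and they do not yield $\sum_{p\le x}g(p)\chi(p)p^{-it}/p=O(1)$; resolving the sign requires Sato--Tate (or an effective version via higher symmetric powers), and you would additionally need uniformity in $t$ and in $\chi$, none of which is carried out. The paper avoids this entire issue: it applies the Hall--Tenenbaum theorem for real multiplicative functions, which requires only the untwisted, $t=0$ quantity $\sum_{p\le x}(1-\varepsilon_{\mathfrak{f}}(p))/p$, and its optimal constant $K=0.32867\ldots$ exceeds $1/4$. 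The needed lower bound $\sum_{p\le x,\,\varepsilon_{\mathfrak{f}}(p)=-1}1/p\ge(\tfrac12-\varepsilon)\log_2 x$ is supplied by Lemma \ref{lem2.2}, i.e.\ by the Sato--Tate theorem of Barnet-Lamb--Geraghty--Harris--Taylor (this is the genuine substitute for your correlation claim), giving $\sum_{n\le x}\varepsilon_{\mathfrak{f}}(n)\ll_{\mathfrak{f},\varepsilon}x(\log x)^{-K+\varepsilon}$, which combined with part (i) and the identity $\tfrac12(|\varepsilon_{\mathfrak{f}}(n)|\pm\varepsilon_{\mathfrak{f}}(n))$ yields \eqref{LowerNpmShort}. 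As written, your proposal's two crucial verifications (the vanishing count at primes for (i), and the non-correlation of signs for (ii)) are either wrong or unsubstantiated, so the argument has genuine gaps even though the architecture matches the paper's.
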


\begin{remark}
(i)
If $N/2$ is square-free, the assumption of a non-CM Shimura lift in Theorem \ref{thm2} will automatically hold 
and hence can be omitted.
\par
(ii)
This theorem shows that the variant of \eqref{ConjectureBK1} 
for $\{{\mathfrak a}_{\mathfrak{f}}(tn^2)\}_{n\in \N}$ holds
and improves Theorem 2 of Kohnen, Lau and Wu \cite{KohnenLauWu2013}.
\end{remark}

In order to measure the non-vanishing of ${\mathfrak a}_{\mathfrak{f}}(tn^2)$, we introduce, as in \cite{Serre1981}, 
$$
i_{\mathfrak{f}}(n) := \max\{j\geq 1 \; : \; {\mathfrak a}_{\mathfrak{f}}(t(n+i)^2) = 0 \;\, \text{for \(0<i\leq j\)}\}
$$
with the convention that \(\max \emptyset = 0\). We hope to get a non-trivial bound of type 
$$
i_{\mathfrak{f}}(n)\ll_{{\mathfrak{f}}, \theta} n^{\theta}
$$
for some \(\theta<1\) and all \(n\geq 1\). 
Clearly a stronger form of the problem is to find \(y\) as small as possible 
(as a function of \(x\), \(y=x^{\theta}\) with \(\theta<1\)) such that
$$
\#\{x<n\leq x+y \; : \; \mu(n)^2=1 \;\, \text{and} \;\, {\mathfrak a}_{\mathfrak{f}}(tn^2)\neq 0\}\gg_{{\mathfrak{f}}, \theta} y,
$$
where $\mu(n)$ is the Möbius function and the implied constant can depend on ${\mathfrak{f}}$ and $\theta$. 

We have the following result.

\begin{theorem}\label{thm3}
Let $k\ge 1$ be an integer, $N\ge 4$ an integer divisible by $4$
and $\chi$ be a real Dirichlet character modulo $N$.
Suppose that ${\mathfrak{f}}\in \mathcal{S}_{k+1/2}^*(N, \chi)$ is a Hecke eigenform and 
$t\ge 1$ is a square-free integer such that ${\mathfrak a}_{\mathfrak{f}}(t)\not=0$.
Assume that its Shimura lift is not of CM type.
\par
{\rm (i)}
For every $\varepsilon>0$, $x\geq x_0({\mathfrak{f}}, \varepsilon)$ and $y\geq x^{7/17+\varepsilon}$, we have
$$
\big|\big\{x<n\leq x+y \; : \; \mu(n)^2=1 \;\, \text{and} \;\, {\mathfrak a}_{\mathfrak{f}}(tn^2)\neq 0\big\}\big|
\gg_{{\mathfrak{f}}, \varepsilon} y. 
$$
In particular for any $\varepsilon>0$ and all $n\geq 1$, we have
$$
i_{\mathfrak{f}}(n)\ll_{{\mathfrak{f}}, \varepsilon} n^{7/17+\varepsilon}.
$$
\par
{\rm (ii)}
For every $\varepsilon>0$, $x\geq x_0({\mathfrak{f}}, \varepsilon)$, $y\geq x^{17/38+100\varepsilon}$ 
and $1\leq a\leq q\leq x^{\varepsilon}$ with $(a, q)=1$, we have
$$
\big|\big\{x<n\leq x+y \; :\; \mu(n)^2=1, \; n\equiv a\,({\rm mod}\, q) \;\, \text{and} \;\, 
{\mathfrak a}_{\mathfrak{f}}(tn^2)\neq 0\big\}\big|
\gg_{{\mathfrak{f}}, \varepsilon} y/q. 
$$
Here the implied constants depend on $\mathfrak{f}$ and $\varepsilon$.
\end{theorem}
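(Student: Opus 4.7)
The plan is to reduce the non-vanishing of $\mathfrak{a}_{\mathfrak{f}}(tn^2)$ on squarefree $n$ to a counting problem for $\mathcal{B}$-free integers, and then to invoke quantitative estimates for $\mathcal{B}$-free numbers in short intervals and in arithmetic progressions.

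First I would apply the Shimura correspondence. Writing $F$ for the Shimura lift of $\mathfrak{f}$ attached to $t$, the Hecke-eigenform property of $\mathfrak{f}$ implies that $n\mapsto \mathfrak{a}_{\mathfrak{f}}(tn^2)/\mathfrak{a}_{\mathfrak{f}}(t)$ is multiplicative when $(n,N)=1$, and at each prime $p\nmid N$ its local factor is an explicit polynomial in the Hecke eigenvalue $\lambda_F(p)$. Consequently, for squarefree $n$ coprime to $N$, one has $\mathfrak{a}_{\mathfrak{f}}(tn^2)\neq 0$ if and only if $n$ has no prime divisor in the exceptional set $\mathcal{P}:=\{p : \mathfrak{a}_{\mathfrak{f}}(tp^2)=0\}$; the finitely many primes dividing $N$ can be absorbed into $\mathcal{P}$ without harm. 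Thus the quantity to bound from below is the number of squarefree integers in $(x,x+y]$ (resp.\ in $(x,x+y]$ restricted to the residue class $a\mods{q}$) no prime factor of which lies in $\mathcal{P}$, equivalently the number of $\mathcal{B}$-free integers there with $\mathcal{B}:=\{p^2:p\text{ prime}\}\cup\mathcal{P}$.

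Under the non-CM hypothesis on $F$, Serre's theorem on vanishing Hecke eigenvalues gives $|\{p\le z : p\in\mathcal{P}\}|\ll z/(\log z)^{1+\delta}$ for some $\delta>0$; in particular $\sum_{p\in\mathcal{P}}p^{-1}$ converges, with sufficiently decaying tails. This is exactly the size condition on $\mathcal{P}$ to which the classical $\mathcal{B}$-free machinery applies. I would then invoke a short-interval estimate of the form $\#\{n\in(x,x+y]:n\text{ is }\mathcal{B}\text{-free}\}\gg y$, valid in an admissible range $y\ge x^{\theta}$. The exponent $\theta=7/17$ in part (i) emerges from running a Rosser--Iwaniec sieve against the primes in $\mathcal{P}$ and against the sparse tail $\{p^2:p>x^{1/4}\}$, coupled with sharp van der Corput bounds for the remainder exponential sums $\sum_{b\sim B}\psi(x/b)$, where $\psi$ is the sawtooth function; the specific numerical value reflects optimization over an exponent pair. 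For part (ii), the analogous scheme is carried out with the progression $n\equiv a\mods{q}$ imposed, replacing the sieve remainders by sums of the shape $\sum_{b\sim B}\psi(x/(bq))$ and demanding uniformity in $q$ up to $x^{\varepsilon}$; the additional loss degrades the admissible exponent to $17/38$, with the $100\varepsilon$ cushion absorbing the dependence on $q$. The bound $i_{\mathfrak{f}}(n)\ll_{\mathfrak{f},\varepsilon}n^{7/17+\varepsilon}$ then falls out of part (i) at $x=n$ and $y=n^{7/17+\varepsilon}$.

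The hard part is the exponential-sum step. The Shimura reduction and the sieve skeleton are by now fairly standard; what is genuinely delicate is obtaining the sharp admissible exponents $7/17$ and $17/38$ with full uniformity in $q$. This is exactly where the analytic heritage of van der Corput honoured in the dedication plays the decisive role: an optimized cascade of $k$-th derivative tests and exponent-pair transformations applied to the sawtooth sums is what pins down the two numerical exponents.
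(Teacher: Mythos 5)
Your proposal is correct and follows essentially the same route as the paper: reduce via the Shimura-lift multiplicativity \eqref{multiplicativity} to counting $\mathscr{B}$-free numbers with $\mathscr{B}$ built from the primes $p$ with $\mathfrak{a}_{\mathfrak{f}}(tp^2)=0$ together with the squares of the remaining primes, verify $\sum_{b\in\mathscr{B}}1/b<\infty$ by Serre's density theorem under the non-CM hypothesis, and quote the short-interval and arithmetic-progression $\mathscr{B}$-free estimates of Kowalski--Robert--Wu and Wu--Zhai for the exponents $7/17$ and $17/38$. Two small points to align your argument with the hypotheses you invoke: take $\mathscr{B}=\mathfrak{P}_{\mathfrak{f}}\cup\{p^2 : p\notin\mathfrak{P}_{\mathfrak{f}}\}$ as in the paper, since your $\mathscr{B}$ contains both $p$ and $p^2$ for $p\in\mathfrak{P}_{\mathfrak{f}}$ and so violates the pairwise-coprimality condition \eqref{Bhypothese2} required by those cited results (even though the $\mathscr{B}$-free sets coincide), and the Serre input needed is his bound for $\{p : a_{f_t}(p)=\ell(p)\}$ with the polynomials $\ell(X)=\pm\mathfrak{a}_{\mathfrak{f}}(t)X^{k-1}$ and $\ell=0$ (because $\mathfrak{a}_{\mathfrak{f}}(tp^2)=0$ is equivalent to $a_{f_t}(p)=\chi_{t,N}(p)\mathfrak{a}_{\mathfrak{f}}(t)p^{k-1}$, not to vanishing of the Hecke eigenvalue), which is exactly Lemma \ref{lem2.3}.
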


\begin{remark}
This theorem can be proved with the help of the \({\mathscr B}\)-free number method 
as in \cite{KowalskiRobertWu2007, WuZhai2013}.
Our principal tools are some estimates for multiple exponential sums (see \cite[Proposition 5]{KowalskiRobertWu2007} and \cite[Proposition 2.1]{WuZhai2013}).
J. G. van der Corput is a pioneer on this domain. 
On the occasion of his 125th birthday,
it is a pleasure for us to write this paper to commemorate the outstanding contribution he made to the number theory.
\end{remark}

\medskip

\noindent{\bf Acknowledgement}.
The authors deeply thank the referee for valuable comments and suggestions.
This work was began when the second author visited Weinan Normal University at the begining of 2015.
He would like to thank the institute for the pleasant working conditions.
B. Chen and J. Wu are supported in part by NSFC (Grant No. 61402335) and IRT1264, respectively.

\vskip 10mm

\section{Background}

\smallskip

For ${\mathfrak{f}}\in \mathcal{S}_{k+1/2}^*(N, \chi)$, 
let $t\ge 1$ be a square-free integer such that ${\mathfrak a}_{\mathfrak{f}}(t)\not=0$.
The Shimura correspondence \cite{Shimura1973} lifts ${\mathfrak{f}}$ to a cusp form $f_t$ of weight $2k$ 
for the group $\Gamma_0(N/2)$ with character $\chi^2$. 
Also it gives a vital relation between their Fourier coefficients, 
\begin{equation}\label{defaft}
a_{f_t}(n)
:= \sum_{d\mid n} \chi_{t,N}(d) d^{k-1} 
{\mathfrak a}_{\mathfrak{f}}\bigg(t\frac{n^2}{d^2}\bigg),
\end{equation}
where $\chi_{t,N}$ denotes the character 
\begin{equation}\label{defchitN}
\chi_{t,N}(d) 
:=  \chi(d)\left(\frac{(-1)^kt}{d}\right)
\end{equation} 
($\big(\frac{(-1)^kt}{d}\big)$ is Legendre's symbole)
and  
\begin{equation}\label{deffz}
f_t(z)
:= \sum_{n\ge 1} a_{f_t}(n) \e^{2\pi\i nz} 
\qquad
(\im z>0).
\end{equation}
($f_t$ is called the Shimura lift of ${\mathfrak{f}}$ associated to $t$.)
Furthermore, if ${\mathfrak{f}}$ is a Hecke eigenform, then so is the Shimura lift of $\mathfrak{f}$. 
In fact, in this case, 
\begin{eqnarray}\label{sl}
f(z) := \mathfrak{a}_{\mathfrak{f}}(t)^{-1} f_t(z)
\end{eqnarray} 
is a normalized Hecke eigenform independent of $t$ (i.e. the first coefficient of Fourier is equal to 1),
and the arithmetic function $n \mapsto {\mathfrak a}_{\mathfrak{f}}(tn^2)$ is multiplicative 
in the following sense (cf. \cite[(1.18)]{Shimura1973}):
\begin{equation}\label{multiplicativity}
{\mathfrak a}_{\mathfrak{f}}(tm^2) {\mathfrak a}_{\mathfrak{f}}(tn^2)
= {\mathfrak a}_{\mathfrak{f}}(t) {\mathfrak a}_{\mathfrak{f}}(tm^2n^2)
\quad{\rm if}\quad
(m, n)=1.
\end{equation}

Write
\begin{equation}\label{deflambdaf_a*f}
\lambda_f(n) 
:= \mathfrak{a}_{\mathfrak{f}}(t)^{-1} a_{f_t}(n) n^{-(2k-1)/2},
\qquad
\mathfrak{a}_{\mathfrak{f}}^*(n) 
:= \mathfrak{a}_{\mathfrak{f}}(t)^{-1} \mathfrak{a}_{\mathfrak{f}}(tn^2) n^{-(k-1/2)}.
\end{equation}
Clearly $\lambda_f(n)$ is the $n$-th normalized Fourier coefficient of $f$ (i.e., $\lambda_f(1)=1$)
and the function $n\mapsto \mathfrak{a}_{\mathfrak{f}}^*(n)$ is multiplicative.

Further we introduce the following notation:
\begin{equation}\label{defLNk}
\mathcal{L} := \log(C_0kN)
\qquad
\text{and}
\qquad
N_k := \prod_{p\le \mathcal{L}^2\,\text{or}\;p\mid (N/2)} p,
\end{equation}
where $C_0$ is an absolute large constant.
Write
\begin{equation*}
\left\{
\begin{array}{rl}
L(s, f)
\hskip -2,5mm & := \displaystyle\sum_{n\ge 1} \lambda_f(n) n^{-s},
\\\noalign{\vskip 1,3mm}
L(s, \mathfrak{a}_{\mathfrak{f}}^*)
\hskip -2,5mm & := \displaystyle\sum_{n\ge 1} \mathfrak{a}_{\mathfrak{f}}^*(n) n^{-s},
\\\noalign{\vskip 1,3mm}
L(s, \chi_{t, N})
\hskip -2,5mm & :=\displaystyle \sum_{n\ge 1} \chi_{t, N}(n) n^{-s},
\end{array}\right.
\qquad
\left\{
\begin{array}{rl}
L^{\flat}(s, f)
\hskip -2,5mm & := \displaystyle\sideset{}{^\flat}\sum_{n\ge 1} \lambda_f(n) n^{-s},
\\\noalign{\vskip 0,5mm}
L^{\flat}(s, \mathfrak{a}_{\mathfrak{f}}^*)
\hskip -2,5mm & := \displaystyle\sideset{}{^\flat}\sum_{n\ge 1} \mathfrak{a}_{\mathfrak{f}}^*(n) n^{-s},
\\\noalign{\vskip 0,5mm}
L^{\flat}(s, \chi_{t, N})
\hskip -2,5mm & := \displaystyle\sideset{}{^\flat}\sum_{n\ge 1} \chi_{t, N}(n)n^{-s},
\end{array}\right.
\end{equation*}
where $\sum^{\flat}$ means that the sum runs over square-free integers $n$ satisfying $(n, N_k)=1$.

\begin{lemma}\label{lem2.1}
Let $k\ge 1$ be an integer, $N\ge 4$ an integer divisible by $4$
and $\chi$ be a real Dirichlet character modulo $N$.
Suppose that ${\mathfrak{f}}\in \mathcal{S}_{k+1/2}^*(N, \chi)$ is a Hecke eigenform and 
$t\ge 1$ is a square-free integer such that ${\mathfrak a}_{\mathfrak{f}}(t)\not=0$.
\par
{\rm (i)}
We have
\begin{equation}\label{Series_Convolution_1}
L(s, f)
= L(s+\tfrac{1}{2}, \chi_{t, N}) L(s, \mathfrak{a}_{\mathfrak{f}}^*),
\end{equation}
for all $s\in \C$; and
\begin{equation}\label{Series_Convolution_2}
L^{\flat}(s, f)
= L^{\flat}(s+\tfrac{1}{2}, \chi_{t, N}) L^{\flat}(s, \mathfrak{a}_{\mathfrak{f}}^*) \mathscr{L}_f(s),
\end{equation}
for $\sigma>\tfrac{1}{2}$,
where the Dirichlet series $\mathscr{L}_f(s)$ converges absolutely for $\sigma>\tfrac{1}{2}$ and
\begin{equation}\label{Estimate:mathscrLfs}
\mathscr{L}_f(s)\asymp_{\varepsilon} 1
\qquad
(\sigma\ge \tfrac{1}{2}+\varepsilon).
\end{equation}
Here the implied constant depends only on $\varepsilon$.
\par
{\rm (ii)}
For any $\varepsilon>0$, we have
\begin{equation}\label{LowerBound_Dirichlet_1}
L(\sigma+{\rm i}\tau, \chi_{t, N})^{-1}
\ll \varepsilon^{-1}
\end{equation}
and
\begin{equation}\label{LowerBound_Dirichlet_2}
L^{\flat}(\sigma+{\rm i}\tau, \chi_{t, N})^{-1}
\ll \varepsilon^{-2}
\end{equation}
for $\sigma\ge 1+\varepsilon$ and $\tau\in \R$,
where the implied constants are absolute.
\par
{\rm (iii)}
For any $\varepsilon>0$, we have
\begin{equation}\label{ConvexityBounds_1}
|L(\sigma+{\rm i}\tau, \mathfrak{a}_{\mathfrak{f}}^*)|
\ll_{\varepsilon} \big(N(k+|\tau|)^2\big)^{\max\{(1-\sigma)/2, \, 0\}+\varepsilon}
\end{equation}
and
\begin{equation}\label{ConvexityBounds_2}
|L^{\flat}(\sigma+{\rm i}\tau, \mathfrak{a}_{\mathfrak{f}}^*)|
\ll_{\varepsilon} \big(N(k+|\tau|)^2\big)^{\max\{(1-\sigma)/2, \, 0\}+\varepsilon}
\end{equation}
for $\sigma\ge \tfrac{1}{2}+\varepsilon$ and $\tau\in \R$,
where the implied constants depend only on $\varepsilon$.
\end{lemma}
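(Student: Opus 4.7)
The first identity \eqref{Series_Convolution_1} would be obtained by unwinding \eqref{defaft}: dividing through by $\mathfrak{a}_{\mathfrak{f}}(t)\,n^{(2k-1)/2}$, setting $n=dm$, and using \eqref{multiplicativity} to rewrite $\mathfrak{a}_{\mathfrak{f}}(tm^2)$ in terms of $\mathfrak{a}_{\mathfrak{f}}^*(m)$, a direct exponent count yields
$$
\lambda_f(n)=\sum_{dm=n}\chi_{t,N}(d)\,d^{-1/2}\mathfrak{a}_{\mathfrak{f}}^*(m),
$$
so forming the Dirichlet series in $n$ gives \eqref{Series_Convolution_1} for $\re s$ large, and the identity extends to all $s\in\C$ by meromorphic continuation. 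For the squarefree version \eqref{Series_Convolution_2} I would compare Euler factors at each prime $p\nmid N_k$: the case $n=p$ of the above convolution gives $\lambda_f(p)=\chi_{t,N}(p)p^{-1/2}+\mathfrak{a}_{\mathfrak{f}}^*(p)$, so the local factor $1+\lambda_f(p)p^{-s}$ of $L^{\flat}(s,f)$ differs from the product of the local factors of $L^{\flat}(s+\tfrac{1}{2},\chi_{t,N})$ and $L^{\flat}(s,\mathfrak{a}_{\mathfrak{f}}^*)$ only by the single cross term $\chi_{t,N}(p)\mathfrak{a}_{\mathfrak{f}}^*(p)p^{-2s-1/2}$. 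Thus
$$
\mathscr{L}_f(s)
=\prod_{p\nmid N_k}\frac{1+\chi_{t,N}(p)p^{-s-1/2}+\mathfrak{a}_{\mathfrak{f}}^*(p)p^{-s}}{(1+\chi_{t,N}(p)p^{-s-1/2})(1+\mathfrak{a}_{\mathfrak{f}}^*(p)p^{-s})},
$$
and since the Shimura lift $f$ is a classical Hecke eigenform of weight $2k$, Deligne's bound gives $|\lambda_f(p)|\le 2$, hence $|\mathfrak{a}_{\mathfrak{f}}^*(p)|\le 3$. Each local factor is then $1+O(p^{-2\sigma-1/2})$ on $\sigma\ge\tfrac{1}{2}+\varepsilon$, giving absolute convergence and \eqref{Estimate:mathscrLfs}.

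\textbf{Parts (ii) and (iii).} For (ii) the trivial Euler product estimate
$$
|L(\sigma+\i\tau,\chi_{t,N})^{-1}|\le\prod_p(1+p^{-\sigma})\le\zeta(\sigma)\le\zeta(1+\varepsilon)\ll\varepsilon^{-1}
$$
on $\sigma\ge 1+\varepsilon$ gives \eqref{LowerBound_Dirichlet_1}; for \eqref{LowerBound_Dirichlet_2} I would write $L^{\flat}(s,\chi_{t,N})^{-1}$ as $L(s,\chi_{t,N})^{-1}$ multiplied by correction factors at primes dividing $N_k$ and at the good primes, each of which is bounded by another $\zeta(\sigma)$-type quantity, producing the $\varepsilon^{-2}$. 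For (iii) the starting point is the convexity bound for $L(s,f)$: since $f$ is a weight $2k$ cusp newform of level dividing $N/2$ with analytic conductor $\asymp N(k+|\tau|)^2$, its $L$-function is bounded by $O_\varepsilon(1)$ on $\sigma=1+\varepsilon$ (absolute convergence with $|\lambda_f(n)|\le d(n)$), and by $O_\varepsilon((N(k+|\tau|)^2)^{1/2+\varepsilon})$ on $\sigma=-\varepsilon$ (functional equation and Stirling); the Phragmén--Lindelöf principle then gives $|L(\sigma+\i\tau,f)|\ll_\varepsilon(N(k+|\tau|)^2)^{\max\{(1-\sigma)/2,0\}+\varepsilon}$ for $\sigma\ge-\varepsilon$. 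Combining this with \eqref{Series_Convolution_1} and the estimate on $L(s+\tfrac{1}{2},\chi_{t,N})^{-1}$ from (ii) (applicable since $\re(s+\tfrac{1}{2})\ge 1+\varepsilon$) yields \eqref{ConvexityBounds_1}; \eqref{ConvexityBounds_2} follows identically from \eqref{Series_Convolution_2} together with $\mathscr{L}_f(s)^{-1}\asymp_\varepsilon 1$.

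\textbf{Main obstacle.} The subtle point is the absolute convergence and two-sided bound for $\mathscr{L}_f(s)$ on $\sigma\ge\tfrac{1}{2}+\varepsilon$. Superficially each local factor is only $1+O(p^{-\sigma})$; the crucial improvement to $1+O(p^{-2\sigma-1/2})$ comes from cancellation between the linear terms $\chi_{t,N}(p)p^{-s-1/2}$ and $\mathfrak{a}_{\mathfrak{f}}^*(p)p^{-s}$ in the ratio, and this cancellation rests decisively on Deligne's bound for the Hecke eigenvalues of the Shimura lift. Verifying this carefully, together with tracking the correction factors at primes dividing $N_k$ that enter parts (ii) and (iii), is the main technical content of the lemma; everything else is either combinatorial bookkeeping or a textbook application of Phragmén--Lindelöf.
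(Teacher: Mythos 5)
Your overall strategy is the same as the paper's: derive the convolution identity \eqref{Coefficient_Convolution} from \eqref{defaft}, treat $\mathscr{L}_f(s)$ as the Euler-product correction in \eqref{Series_Convolution_2}, bound the character $L$-function from below for $\sigma\ge 1+\varepsilon$, and obtain (iii) by dividing the convexity bound for $L(s,f)$ by these lower bounds. Two of your variations are harmless or even simpler: in (ii) the trivial Euler-product bound $|L(\sigma+\i\tau,\chi_{t,N})^{-1}|\le\zeta(\sigma)\ll\varepsilon^{-1}$ does the job where the paper invokes the $3$-$4$-$1$ inequality, and your direct Euler-factor computation of $\mathscr{L}_f(s)$ is equivalent to the paper's convolution-defined function $\ell$ with its recurrence bound $|\ell(p^\nu)|\le C^\nu$.

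There is, however, a genuine gap in the key estimate \eqref{Estimate:mathscrLfs}. Writing each local factor of $\mathscr{L}_f$ as $1-\chi_{t,N}(p)\mathfrak{a}_{\mathfrak{f}}^*(p)p^{-2s-1/2}D_p(s)^{-1}$ with $D_p(s)=(1+\chi_{t,N}(p)p^{-s-1/2})(1+\mathfrak{a}_{\mathfrak{f}}^*(p)p^{-s})$, your claim that this is $1+O(p^{-2\sigma-1/2})$ on $\sigma\ge\tfrac{1}{2}+\varepsilon$ does not follow from Deligne's bound alone: for small primes one can have $|\mathfrak{a}_{\mathfrak{f}}^*(p)p^{-s}|\le 3p^{-1/2}>1$, so $1+\mathfrak{a}_{\mathfrak{f}}^*(p)p^{-s}$ may vanish (or be arbitrarily small, depending on $f$) near $\sigma=\tfrac12$, and then neither the $O$-estimate nor the two-sided bound with a constant depending only on $\varepsilon$ survives — and this uniformity in $k,N,f$ is exactly what Theorem \ref{thm1} needs. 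The missing ingredient is the definition \eqref{defLNk}: every prime $p\le\mathcal{L}^2$ divides $N_k$, with $\mathcal{L}\ge\log C_0$ and $C_0$ a large absolute constant, so the product defining $\mathscr{L}_f$ runs only over primes $p>\mathcal{L}^2$, for which $|\chi_{t,N}(p)p^{-s-1/2}|+|\mathfrak{a}_{\mathfrak{f}}^*(p)p^{-s}|\le 4p^{-1/2}\le 4/\mathcal{L}$ is small and $D_p(s)$ is uniformly close to $1$; this is precisely the role of the coprimality condition $(n,N_k)=1$ in the paper (there realized through $|\ell(p^\nu)|\le C^\nu$ together with $p\ge\mathcal{L}^2\ge 100C^2$). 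A smaller omission of the same kind occurs in (iii): to get \eqref{ConvexityBounds_2} from \eqref{Series_Convolution_2}, \eqref{Estimate:mathscrLfs} and \eqref{LowerBound_Dirichlet_2} you also need a convexity-type bound for $L^{\flat}(s,f)$ itself, i.e. control of the factor $G_f(s)=L^{\flat}(s,f)/L(s,f)$ as in \eqref{ConvexityBound_Lsf_flat}, which your phrase ``follows identically'' passes over; it is easy to supply via Deligne's bound, contributing only $(kN)^{\varepsilon}$, but it should be stated.
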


\begin{proof}
In view of the definition \eqref{deflambdaf_a*f}, the formula \eqref{defaft} is equivalent to
\begin{equation}\label{Coefficient_Convolution}
\lambda_f(n)
= \sum_{d\mid n} \frac{\chi_{t,N}(d)}{\sqrt{d}} {\mathfrak a}_{\mathfrak{f}}^*\bigg(\frac{n}{d}\bigg).
\end{equation}
Clearly this formula implies \eqref{Series_Convolution_1} for $\sigma>1$.
By analytic continuation, this relation is true for all $s\in \C$ 
since $L(s, f)$ and $L(s+\tfrac{1}{2}, \chi_{t, N})$ are entire. 

Put $g(n):=\chi_{t,N}(n)\mu(n)^2/\sqrt{n}$ and $h(n):={\mathfrak a}_{\mathfrak{f}}^*(n) \mu(n)^2$,
where $\mu(n)$ is the Möbius function.
Define the multiplicative function $\ell(n)$ by the relation 
\begin{equation}\label{def:elln}
\lambda_f(n)\mu(n)^2 = (g*h*\ell)(n).
\end{equation}
In view of \eqref{Coefficient_Convolution}, we have $\ell(p)=0$ for all primes $p$.
Next we shall prove that there is an absolute constant $C$ such that
\begin{equation}\label{UB:ellpnu}
|\ell(p^{\nu})|\le C^{\nu}
\end{equation}
for all primes $p$ and all integers $\nu\ge 2$.
In fact, the definition of $\ell(n)$ allows us to write
\begin{align*}
0 
& = \lambda_f(p^{\nu})\mu(p^{\nu})^2 =\sum_{\nu_1+\nu_2+\nu_2=\nu} g(p^{\nu_1}) h(p^{\nu_2}) \ell(p^{\nu_3})
\\
& = \ell(p^{\nu}) + \ell(p^{\nu-1})(g(p)+h(p)) + \ell(p^{\nu-2})g(p)h(p)
\end{align*}
for all primes $p$ and all integers $\nu\ge 2$.
This implies
\begin{equation}\label{reccurence}
|\ell(p^{\nu})|\le (|g(p)|+|h(p)|+|g(p)h(p)|)\max\{|\ell(p^{\nu-1})|, |\ell(p^{\nu-2})|\}.
\end{equation}
In view of the definition of $g(n)$ and of $h(n)$ and \cite[Lemma 6]{LauRoyerWu2015}, 
we easily see that there is an absolute constant $C$ such that
\begin{equation}\label{initiale}
|g(p)|+|h(p)|+|g(p)h(p)|\le C.
\end{equation}
Clearly \eqref{UB:ellpnu} follows immediately from \eqref{reccurence} and \eqref{initiale}
by a simple recurrence. 
Now by using \eqref{def:elln}, \eqref{UB:ellpnu} and $\ell(p)=0$, 
the formula \eqref{Series_Convolution_2} holds for $\sigma>\tfrac{1}{2}$ with
$$
\mathscr{L}_f(s)
:= \sum_{\substack{n\ge 1\\ (n, N_k)=1}} \ell(n) n^{-s}
= \prod_{p\nmid N_k} \Big(1 + \sum_{\nu\ge 2} \ell(p^{\nu}) p^{-\nu s}\Big)
$$
and we have $\mathscr{L}_f(s)\asymp_{\varepsilon} 1$ for $\sigma\ge \tfrac{1}{2}+\varepsilon$
thanks to the fact that $p\nmid N_k$ implies $p\ge \mathcal{L}^2\ge (\log C_0)^2\ge 100C^2$,
since we have supposed that $C_0$ is a large constant.
Here the implied constants in the $\asymp_{\varepsilon}$-symbol depend only on $\varepsilon$.

Next we prove the assertion (ii).
Since $\chi_{t, N}^2=\chi_0$, Theorem II.8.7 of \cite{Tenenbaum1995} with $\chi=\chi_{t, N}$ gives us 
$$
L(\sigma, \chi_0)^3 |L(\sigma+\text{i}\tau, \chi_{t, N})|^4 |L(\sigma+\text{i}2\tau, \chi_0)|\ge 1
$$
for $\sigma>1$ and $\tau\in \R$.
On the other hand, 
for $\sigma\ge 1+\varepsilon$ and $\tau\in \R$ we have trivially 
$$
|L(\sigma, \chi_0)|+|L(\sigma+\text{i}2\tau, \chi_0)|
\le 2\zeta(1+\varepsilon)\ll \varepsilon^{-1},
$$
where the implied constant is absloute.
The inequality \eqref{LowerBound_Dirichlet_1} follows immediately.

For $\sigma>1$, we have
$$
L^{\flat}(s, \chi_{t, N})
= \prod_{p\nmid N_k} \bigg(1+\frac{\chi_{t, N}(p)}{p^s}\bigg)
= L(s, \chi_{t, N}) G_{\chi_{t, N}}(s),
$$
where the Dirichlet series of
$$
G_{\chi_{t, N}}(s)
:= \prod_{p\mid N_k} \bigg(1-\frac{\chi_{t, N}(p)}{p^s}\bigg)
\prod_{p\nmid N_k}
\bigg(1-\frac{\chi_{t, N}(p)^2}{p^{2s}}\bigg)
$$
converges absolutely and 
so $G_{\chi_{t, N}}(s)\gg_\varepsilon N^{-\varepsilon}$ in the half-plane $\sigma\ge \tfrac{1}{2}+\varepsilon$ 
(with the implied constant depending only on $\varepsilon$)
and $G_{\chi_{t, N}}(s)\gg \varepsilon$ for $\sigma\ge 1+\varepsilon$
(here the implied constant is absolute).
Now the inequality \eqref{LowerBound_Dirichlet_2} follows immediately from \eqref{LowerBound_Dirichlet_1}.

Finally we treat the assertion (iii).
Under our hypothesis, $f$ is a newform of weight $2k$ for the group $\Gamma_0(N/2)$ with character $\chi^2$.
Thus we have the convexity bound
\begin{equation}\label{ConvexityBound_Lsf}
L(s, f)\ll_{\varepsilon} \big(N(k+|\tau|)^2\big)^{\max\{(1-\sigma)/2, \, 0\}+\varepsilon}
\end{equation}
for $\sigma\ge \tfrac{1}{2}+\varepsilon$ and $\tau\in \R$
(see \cite[page 202, (1.22)]{Michel2007} or \cite[page 4, (1.12)]{Hacos2003}),
where the implied constant depends only on $\varepsilon$.

For $\sigma>1$, we have
$$
L^{\flat}(s, f)
= \prod_{p\nmid N_k} \bigg(1+\frac{\lambda_f(p)}{p^s}\bigg)
= L(s, f) G_f(s),
$$
where the Dirichlet series of
$$
G_f(s)
:= \prod_{p\mid N_k} \bigg(1-\frac{\lambda_f(p)}{p^s}\bigg)
\prod_{p\nmid N_k}
\bigg(1-\frac{\lambda_f(p)^2}{p^{2s}}\bigg)
$$
converges absolutely and so $G_f(s)\ll_\varepsilon (kN)^\varepsilon$ in the half-plane $\sigma\ge \tfrac{1}{2}+\varepsilon$ 
(as $|\lambda_f(p)|\le 2$ by Deligne's inequality \cite{Deligne1974}).
Using the convexity bound \eqref{ConvexityBound_Lsf}, we can derive
\begin{equation}\label{ConvexityBound_Lsf_flat}
L^{\flat}(s, f)
\ll_\varepsilon \big(N (k+|\tau|)^2\big)^{\max\{(1-\sigma)/2, \, 0\}+\varepsilon}
\end{equation}
for $\sigma\ge \tfrac{1}{2}+\varepsilon$ and $\tau\in \R$.
Here the implied constants in the $\ll_{\varepsilon}$-symbol depend only on $\varepsilon$.

By \eqref{Series_Convolution_1}, \eqref{LowerBound_Dirichlet_1} and \eqref{ConvexityBound_Lsf},
we get \eqref{ConvexityBounds_1}.
Similarly we can derive \eqref{ConvexityBounds_2} from \eqref{Series_Convolution_2}, \eqref{Estimate:mathscrLfs},
\eqref{LowerBound_Dirichlet_2} and \eqref{ConvexityBound_Lsf_flat}.
\end{proof}

The second lemma will be needed in the proof of Theorem \ref{thm2}.

\begin{lemma}\label{lem2.2}
Let $k\ge 1$ be an integer, $N\ge 4$ an integer divisible by $4$
and $\chi$ be a real Dirichlet character modulo $N$.
Suppose that ${\mathfrak{f}}\in \mathcal{S}_{k+1/2}^*(N, \chi)$ is a Hecke eigenform and 
$t\ge 1$ is a square-free integer such that ${\mathfrak a}_{\mathfrak{f}}(t)>0$.
Assume that its Shimura lift $f_t$ is not of CM type.
Then for any $\varepsilon>0$, there is a constant $x_0({\mathfrak{f}}, \varepsilon)$ such that
$$
\sum_{\substack{p\le x\\ {\mathfrak a}_{\mathfrak{f}}(tp^2)<0}} 1
\ge \bigg(\frac{1}{2}-\varepsilon\bigg)\frac{x}{\log x}
$$
for $x\ge x_0({\mathfrak{f}}, \varepsilon)$.
\end{lemma}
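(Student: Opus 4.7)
The plan is to translate the question about signs of $\mathfrak{a}_{\mathfrak{f}}(tp^2)$ into one about signs of $\lambda_f(p)$ via the Shimura relation, and then invoke the Sato--Tate distribution for the non-CM newform $f$ of integral weight $2k$.

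First I would apply the Shimura relation \eqref{defaft} at $n=p$ prime. Since the divisors of $p$ are just $1$ and $p$, one obtains
$$
a_{f_t}(p)
= {\mathfrak a}_{\mathfrak{f}}(tp^2) + \chi_{t, N}(p)\, p^{k-1} {\mathfrak a}_{\mathfrak{f}}(t).
$$
Dividing by $\mathfrak{a}_{\mathfrak{f}}(t)\,p^{(2k-1)/2}$ and using the normalizations \eqref{deflambdaf_a*f} yields the clean identity $\lambda_f(p) = \mathfrak{a}_{\mathfrak{f}}^*(p) + \chi_{t, N}(p)\,p^{-1/2}$. Since $\mathfrak{a}_{\mathfrak{f}}(t)>0$, the signs of ${\mathfrak a}_{\mathfrak{f}}(tp^2)$ and $\mathfrak{a}_{\mathfrak{f}}^*(p)$ coincide; and because $|\chi_{t, N}(p)|\leq 1$, for any fixed $\delta>0$ and every prime $p>\delta^{-2}$ the inequality $\lambda_f(p)<-\delta$ already forces $\mathfrak{a}_{\mathfrak{f}}(tp^2)<0$. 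Thus it suffices to bound from below the number of primes $p\leq x$ with $\lambda_f(p)<-\delta$ and then let $\delta\to 0$.

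Next I would invoke the Sato--Tate theorem. By the hypothesis that $f_t$ is not of CM type, the normalized eigenform $f=\mathfrak{a}_{\mathfrak{f}}(t)^{-1}f_t$, which is a newform on $\Gamma_0(N/2)$ of weight $2k$ and trivial character (recall $\chi$ is real so $\chi^2=\chi_0$), is a non-CM holomorphic cusp form of integral weight. The Sato--Tate conjecture in this setting is a theorem of Barnet-Lamb--Geraghty--Harris--Taylor, and it gives, for every fixed $\delta>0$,
$$
\lim_{x\to\infty} \frac{|\{p\leq x : \lambda_f(p)<-\delta\}|}{\pi(x)}
= \frac{1}{2\pi}\int_{-2}^{-\delta}\!\!\sqrt{4-y^2}\,dy
\geq \frac{1}{2}-\frac{\delta}{\pi},
$$
where the last inequality uses that the Sato--Tate density is bounded above by $1/\pi$ on $[-2,2]$.

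Combining the two steps with $\delta:=\pi\varepsilon/2$ and the prime number theorem $\pi(x)\sim x/\log x$, for $x\geq x_0(\mathfrak{f},\varepsilon)$,
$$
\sum_{\substack{p\leq x\\ \mathfrak{a}_{\mathfrak{f}}(tp^2)<0}} 1
\;\geq\; |\{p\leq x : \lambda_f(p)<-\delta\}| - O_{\delta}(1)
\;\geq\; \bigl(\tfrac{1}{2}-\varepsilon\bigr)\frac{x}{\log x},
$$
as desired. The only deep input is the Sato--Tate theorem for non-CM holomorphic newforms, which I would cite as a black box; everything else is a routine use of the Shimura correspondence and of the continuity of the Sato--Tate measure at the origin. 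The main potential obstacle would simply be verifying that our hypotheses (non-CM Shimura lift $f_t$, real character $\chi$ so $f$ has trivial nebentypus) do fall within the scope of the Barnet-Lamb--Geraghty--Harris--Taylor result, which they do.
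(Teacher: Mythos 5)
Your proposal is correct and follows essentially the same route as the paper: take $n=p$ in the Shimura relation to get $\lambda_f(p)=\mathfrak{a}_{\mathfrak{f}}^*(p)+\chi_{t,N}(p)p^{-1/2}$, note that $\mathfrak{a}_{\mathfrak{f}}(t)>0$ makes the signs of $\mathfrak{a}_{\mathfrak{f}}^*(p)$ and $\mathfrak{a}_{\mathfrak{f}}(tp^2)$ agree, deduce $\mathfrak{a}_{\mathfrak{f}}(tp^2)<0$ whenever $p$ is large and $\lambda_f(p)$ is slightly negative, and conclude by the Sato--Tate theorem of Barnet-Lamb--Geraghty--Harris--Taylor for the non-CM newform $f$. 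Your treatment is in fact slightly more explicit than the paper's (the choice $\delta=\pi\varepsilon/2$ and the bound $\tfrac12-\tfrac{\delta}{\pi}$ on the Sato--Tate measure), but there is no substantive difference.
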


\begin{proof}
Taking $n=p$ in \eqref{Coefficient_Convolution}, it follows that 
$$
\mathfrak{a}_{\mathfrak{f}}^*(p)
= \lambda_f(p) - \frac{\chi_{t,N}(p)}{\sqrt{p}}\cdot
$$
In view of the hypothesis that ${\mathfrak a}_{\mathfrak{f}}(t)>0$ and \eqref{deflambdaf_a*f}, we have
$$
p>\varepsilon^{-2}\;\,\text{and}\;\,
\lambda_f(p)\le -\varepsilon
\;\Rightarrow\;
\mathfrak{a}_{\mathfrak{f}}^*(p)<0
\;\Rightarrow\;
\mathfrak{a}_{\mathfrak{f}}(tp^2)<0.
$$
Thus
$$
\sum_{\substack{p\le x\\ {\mathfrak a}_{\mathfrak{f}}(tp^2)<0}} 1
\ge \sum_{\substack{\varepsilon^{-2}<p\le x\\ \lambda_f(p)\le -\varepsilon}} 1.
$$
Now the required inequality is an immediate consequence of the Sato-Tate conjecture
(proved by Barnet-Lamb, Geraghty, Harris and Taylor \cite{BGHT2011}).
\end{proof}

The next lemma comes from the first part of Theorem 15 in Serre \cite{Serre1981}, 
which is the key tool for the proof of Theorem \ref{thm3}.

\begin{lemma}\label{lem2.3}
Let $g$ be any normalized Hecke eigenform of integral weight $\ge 2$ and of level $M$. 
Suppose that $\ell(X)\in \C[X]$ is any polynomial.
Write $a_g(n)$ for the $n$-th Fourier coefficient of $g$. 
If $g$ is not of CM type, then
\begin{equation}\label{SerreNewform}
\sum_{\substack{p\le x\\ p\nmid M, \, a_g(p)=\ell(p)}} 1
\ll_{g, \ell, \varepsilon} \frac{x}{(\log x)^{5/4-\varepsilon}}
\end{equation}
holds for any $\varepsilon>0$ and all $x\ge 2$,
where the implied constant depends on $g, \ell$ and $\varepsilon$.
\end{lemma}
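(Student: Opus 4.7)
This bound is essentially Theorem 15 of \cite{Serre1981}; my plan is to recover it using the compatible system of $\ell$-adic Galois representations attached to $g$ by Deligne, combined with an effective Chebotarev density theorem, and exploiting that $g$ is non-CM through Ribet's openness theorem.

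For each rational prime $q$ and each prime $\mathfrak{q}$ of the coefficient field $K_g$ above $q$, Deligne's representation $\rho_{\mathfrak{q}} \colon G_{\Q} \to GL_2(\mathcal{O}_{K_g,\mathfrak{q}})$ satisfies $\mathrm{tr}\,\rho_{\mathfrak{q}}(\mathrm{Frob}_p) = a_g(p)$ for $p \nmid Mq$. Reducing modulo $\mathfrak{q}$ and pairing with the mod-$q$ cyclotomic character cuts out a finite Galois extension $L_q/\Q$ whose Galois group $G_q$ sits inside $GL_2(\F_{\mathfrak{q}}) \times (\Z/q)^{\times}$. Since $g$ is not of CM type, Ribet's theorem ensures that for all but finitely many $q$, the group $G_q$ contains a fixed-index subgroup of $SL_2(\F_q) \times (\Z/q)^{\times}$. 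By Deligne's bound $|a_g(p)| \le 2 p^{(k-1)/2}$, the equality $a_g(p) = \ell(p)$ is vacuous for large $p$ when $\deg \ell > (k-1)/2$; otherwise, taking $q > 4 p^{(k-1)/2}$ converts it into a congruence $a_g(p) \equiv \ell(p) \pmod{\mathfrak{q}}$, which places $\mathrm{Frob}_p$ in a subset $T_q \subset G_q$ of density $|T_q|/|G_q| \ll 1/q$ (directly computed by fixing the trace of the $GL_2$-component to $\ell(r)$, where $r$ is the cyclotomic factor of $\mathrm{Frob}_p$).

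Next I would apply the unconditional effective Chebotarev density theorem of Lagarias--Odlyzko to $L_q/\Q$ to obtain a bound of shape $\pi(x)/q + E(x,q)$ for the left-hand side of \eqref{SerreNewform}, where $E(x,q)$ is controlled by a power of the conductor of $L_q$. A single well-chosen $q = q(x)$ already yields $x/(\log x)^{1+\delta}$ for some $\delta > 0$; to reach the exponent $5/4 - \varepsilon$, I would follow Serre and iterate over several auxiliary primes $q$, exploiting the near-independence (as $q$ varies) of the mod-$q$ representations granted by Ribet to multiply the $1/q$ savings through a sieve-type combination. The main obstacle is the tension between the main term $\pi(x)/q$ (favouring large $q$) and the unconditional Chebotarev error (favouring small $q$); a single optimal $q$ cannot reach $5/4$, and it is precisely the delicate averaging over many auxiliary primes in Serre's argument that would require the most care to reproduce in detail.
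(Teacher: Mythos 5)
The paper does not prove this lemma at all: it is quoted verbatim from the first part of Th\'eor\`eme 15 of Serre \cite{Serre1981}, so the only question is whether your sketch would independently establish the stated bound, and as written it does not. You correctly identify Serre's toolkit (Deligne's $\mathfrak{q}$-adic representations, large image via Ribet for non-CM forms, unconditional effective Chebotarev of Lagarias--Odlyzko), but the argument stops exactly at the step that produces the exponent $5/4-\varepsilon$: you concede that a single auxiliary prime $q$ cannot reach it and appeal to an unspecified ``sieve-type combination'' over several auxiliary primes. That mechanism does not work in the unconditional regime: the Lagarias--Odlyzko error term forces the degree of the auxiliary Galois extension to be at most a small power of $\log x$, and passing to the compositum over primes $q_1,\dots,q_r$ inflates the degree like $\prod_i q_i^4$ while the density saving is only $\prod_i q_i^{-1}$, so spreading the modulus over several primes gains nothing over a single optimally chosen modulus. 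The exponent $5/4=1+\tfrac14$ in Serre comes from a different source: his refined $\ell$-adic Chebotarev theorem (Th\'eor\`eme 12 of \cite{Serre1981}) applied to a \emph{single} $\ell$-adic representation whose image is open in $GL_2(\Z_\ell)$ (dimension $4$, by Ribet since $g$ is non-CM), with the condition $a_g(p)=\ell(p)$ cutting out a closed conjugation-invariant subset of codimension $1$; the saving $1/4$ is the ratio (codimension)/(dimension), obtained by climbing the tower of reductions mod $\ell^n$ with $n$ tied to $x$ and using Serre's sharpened control of conductors and degrees --- not by multiplying savings across distinct primes.

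A second, more local flaw: you require $q>4p^{(k-1)/2}$ to ``convert'' the equality $a_g(p)=\ell(p)$ into a congruence mod $\mathfrak{q}$. Since $p$ ranges up to $x$, such a $q$ would be a power of $x$, and the corresponding extension (degree roughly $q^4$) is hopelessly beyond the reach of any unconditional Chebotarev theorem. This largeness is also unnecessary: because you only need an upper bound, the implication $a_g(p)=\ell(p)\Rightarrow a_g(p)\equiv \ell(p)\ (\mathrm{mod}\ \mathfrak{q})$ holds for \emph{every} auxiliary modulus, small ones included, and the set of Frobenius classes satisfying the congruence already has density $O(1/q)$. Dropping that requirement repairs the local step, but the decisive quantitative issue of the previous paragraph remains: as proposed, the argument yields at best an exponent strictly smaller than $5/4$, and the route you gesture at for closing the gap is not the one that works.
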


\vskip 8mm

\section{The proof of Theorem \ref{thm1}}

Let $N_k$ be defined as in \eqref{defLNk}.
Consider the summatory function
\begin{equation}\label{defSfx}
S_{\mathfrak{f}}(x) 
:= \sum_{\substack{n\le x\\ (n, N_k)=1}} \mathfrak{a}_{\mathfrak{f}}^*(n) \mu(n)^2 \log\left(\frac{x}{n}\right)
= \mathop{{\sum}^{\flat}}_{\hskip -1,5mm n\le x} \mathfrak{a}_{\mathfrak{f}}^*(n) \log\left(\frac{x}{n}\right).
\end{equation}

\subsection{Upper bound for $S_{\mathfrak{f}}(x)$}\

\begin{proposition}\label{prop1}
Under the condition of Theorem \ref{thm1}, we have
\begin{equation}\label{UB_Sfx}
S_{\mathfrak{f}}(x)\ll_{\varepsilon} (k^2N)^{1/4+\varepsilon} x^{1/2}
\end{equation}
for all $x\ge 2$, where the implied constant depends on $\varepsilon$ only.
\end{proposition}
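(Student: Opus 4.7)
The plan is to prove Proposition \ref{prop1} by a standard Mellin contour shift applied to $L^{\flat}(s,\mathfrak{a}_{\mathfrak{f}}^*)$, exploiting the convexity bound \eqref{ConvexityBounds_2} on the critical line. First I would write
$$
S_{\mathfrak{f}}(x) = \frac{1}{2\pi\i} \int_{c-\i\infty}^{c+\i\infty} L^{\flat}(s,\mathfrak{a}_{\mathfrak{f}}^*)\,\frac{x^s}{s^2}\,\d s
$$
for some $c>1$, using the familiar identity $\frac{1}{2\pi\i}\int_{(c)} y^s/s^2\,\d s = \log y$ if $y\ge 1$ and $0$ if $0<y<1$, together with Deligne's bound to ensure absolute convergence of the Dirichlet series and of the integral.

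Next I would shift the contour to the vertical line $\sigma = \tfrac{1}{2} + 1/\log x$. The crucial holomorphy input comes from Lemma \ref{lem2.1}(i): the identity \eqref{Series_Convolution_2} expresses $L^{\flat}(s,\mathfrak{a}_{\mathfrak{f}}^*)$ as the quotient of the entire function $L^{\flat}(s,f)$ by $L^{\flat}(s+\tfrac{1}{2},\chi_{t,N})\,\mathscr{L}_f(s)$, both of which, by \eqref{LowerBound_Dirichlet_2} and \eqref{Estimate:mathscrLfs}, are holomorphic and nonzero for $\sigma>\tfrac{1}{2}$. Hence no poles are crossed in the strip $\tfrac{1}{2}+1/\log x \le \sigma \le c$, and the double pole of $x^s/s^2$ at $s=0$ lies to the left of the new contour. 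The horizontal segments contribute negligibly because \eqref{ConvexityBounds_2} gives polynomial growth in $|\tau|$, easily defeated by the $1/s^2$ factor.

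It then remains to estimate the integral on the shifted line. Since $x^{\sigma} = \e\cdot x^{1/2}$ there, and since Lemma \ref{lem2.1}(iii) yields
$$
\bigl|L^{\flat}(\tfrac{1}{2}+\tfrac{1}{\log x}+\i\tau,\mathfrak{a}_{\mathfrak{f}}^*)\bigr|
\ll_{\varepsilon} \bigl(N(k+|\tau|)^2\bigr)^{1/4+\varepsilon},
$$
I would bound
$$
S_{\mathfrak{f}}(x) \ll_{\varepsilon} x^{1/2} N^{1/4+\varepsilon} \int_{-\infty}^{\infty} \frac{(k+|\tau|)^{1/2+2\varepsilon}}{(1+|\tau|)^2}\,\d\tau.
$$
Splitting the $\tau$-integral according to whether $|\tau|\le 1$, $1\le |\tau|\le k$, or $|\tau|>k$, each range contributes at most $O(k^{1/2+2\varepsilon})$. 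Combining gives $S_{\mathfrak{f}}(x) \ll_{\varepsilon} (k^2N)^{1/4+\varepsilon} x^{1/2}$, which is \eqref{UB_Sfx}.

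The main obstacle is bookkeeping: one must verify that the convexity bound of Lemma \ref{lem2.1}(iii), combined with the decay $1/s^2$, suffices to push the contour all the way down to $\sigma = \tfrac{1}{2}+1/\log x$ (rather than the more conservative $\tfrac{1}{2}+\varepsilon$), so that the factor $x^{\sigma}$ is of order $x^{1/2}$ without any parasitic $x^{\varepsilon}$. This requires taking the $\varepsilon$ in \eqref{ConvexityBounds_2} small enough that the implied constant remains uniform as the contour approaches the line $\sigma=\tfrac{1}{2}$; aside from that, the argument is a routine application of the Phragmén–Lindelöf-style machinery already encoded in Lemma \ref{lem2.1}.
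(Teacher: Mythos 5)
Your proposal is correct and follows essentially the same route as the paper: Perron's formula with the kernel $x^s/s^2$, a contour shift to just right of $\re s=\tfrac{1}{2}$ justified by the holomorphy coming from Lemma \ref{lem2.1}(i), and the convexity bound \eqref{ConvexityBounds_2} on the shifted line. The only difference is cosmetic: the paper shifts to $\re s=\tfrac{1}{2}+\varepsilon$ and absorbs the resulting $x^{\varepsilon}$ into the $\varepsilon$-exponent, while you shift to $\tfrac{1}{2}+1/\log x$, a harmless refinement whose uniformity concern you correctly flag.
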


\begin{proof}
The Perron formula (cf. \cite[Theorem II.2.3]{Tenenbaum1995}) gives
$$
S_{\mathfrak{f}}(x) 
= \frac{1}{2\pi\i} \int_{2-\i\infty}^{2+\i\infty} L^{\flat}(s, \mathfrak{a}_{\mathfrak{f}}^*) \frac{x^{s}}{s^2} \d s.
$$
Moving the line of integration $\re s=2$ to $\re s=\dm+\varepsilon$ 
and applying the convexity bound \eqref{ConvexityBounds_2} for $L^{\flat}(s, \mathfrak{a}_{\mathfrak{f}}^*)$,
we obtain the required upper bound \eqref{UB_Sfx}.
\end{proof}

\subsection{Two preliminary lemmas}\

\vskip 1mm

In order to establish the required lower bound for $S_{\mathfrak{f}}(x)$,
we need two mean value theorems of multiplicative functions over friable (i.e. smooth) integers coprime with $q$.
For $x\ge 1$, $y\ge 2$ and $q\in \N$, define
$$
\Xi_{q}(x, y)
:= \sum_{\substack{n\le x, \, (n, q)=1\\ P(n)\le y}} \mu(n)^2 
\qquad{\rm and}\qquad
\Xi_{q}(x)
:= \Xi_{q}(x, x),
$$
where $P(n)$ is the largest prime factor of $n$.

The first lemma is a particular case of \cite[Lemma 4.2]{LauLiuWu2012} with $\kappa=1$.

\begin{lemma}\label{squrefreefriablecomprime}
Let $U>1$ be a fixed constant. For some suitable constant
$C=C(U)$ depending only on $U$, we have
\begin{equation}\label{Xiqyuy}
\begin{aligned}
\Xi_{q}(y^u, y)
& = \Pi_{q} \, y^u \rho(u)
\bigg\{1+O_{U}\bigg(\frac{L_q^{{\rm e}+2}}{\sqrt{\log y}}\bigg)\bigg\}
\end{aligned}
\end{equation}
uniformly for
\begin{equation}\label{cond.yq}
q\ge 1,
\qquad
y\ge \exp(2CL_q^{{\rm e}+2}),
\qquad
U^{-1}\le u\le U,
\end{equation}
where $L_q := \log(\omega(q)+3)$, 
$\omega(q)$ is the number of distinct prime factors of $q$,
$\varphi(q)$ is the Euler totient function and
\begin{equation}\label{defPiq}
\Pi_{q}
:= \frac{\varphi(q)}{q}
\prod_{p\nmid q} \bigg(1-\frac{1}{p^2}\bigg).
\end{equation}
Here $\rho(t)$ be the unique continuous solution of the difference-differential equation
\begin{equation}\label{defrho2}
\rho(t)=1 \quad (0\le t\le 1),
\qquad
t\rho'(t) = - \rho(t-1)
\quad
(t>1).
\end{equation}
The implied constant in the $O_U$-symbol depends only on $U$.
\end{lemma}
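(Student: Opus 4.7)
The plan is straightforward because the lemma is presented as the specialization $\kappa=1$ of \cite[Lemma 4.2]{LauLiuWu2012}; I describe the standard argument that underlies such a statement and indicate where the uniformity in $q$ enters.

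First I would dispose of the squarefree condition by writing $\mu(n)^2=\sum_{d^2\mid n}\mu(d)$, which gives
$$
\Xi_{q}(y^u, y)
= \sum_{\substack{d\le y^{u/2}\\ (d,q)=1,\, P(d)\le y}} \mu(d)\, \Psi_{qd^2}(y^u/d^2, y),
$$
where $\Psi_m(x,y)$ denotes the number of $y$-friable integers $n\le x$ with $(n,m)=1$. This trades the condition $\mu(n)^2=1$ for a Möbius sum of friable counts in classes coprime to a modulus, which is the natural object on which sharp uniform asymptotics are available.

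Next I would invoke the uniform asymptotic for $\Psi_m(x,y)$ in the Hildebrand--de Bruijn range, of the shape
$$
\Psi_{qd^2}(y^u/d^2, y)
= \frac{\varphi(q)}{q}\cdot \frac{y^u}{d^2}\, \rho\bigg(u-\frac{2\log d}{\log y}\bigg)
\bigg\{1+O\bigg(\frac{L_q^{{\rm e}+2}}{\sqrt{\log y}}\bigg)\bigg\},
$$
valid uniformly in $d$ not too large. Producing this expansion uniformly in $q$, rather than simply in $\log q$, is the technical engine of \cite[Lemma 4.2]{LauLiuWu2012}; the hypothesis $y\ge\exp(2CL_q^{{\rm e}+2})$ is precisely what allows the $\omega(q)$-dependent losses to be absorbed into $L_q=\log(\omega(q)+3)$ rather than the more brutal $\log q$.

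To assemble the main term I would use that $\rho$ is slowly varying to replace $\rho(u-2\log d/\log y)$ by $\rho(u)$ up to an error that is dominated by the existing $O$-term, and then evaluate the $d$-sum via the elementary identity
$$
\sum_{(d,q)=1}\frac{\mu(d)}{d^2}=\prod_{p\nmid q}\bigg(1-\frac{1}{p^2}\bigg),
$$
which, multiplied by the factor $\varphi(q)/q$, reproduces exactly the constant $\Pi_{q}$ of \eqref{defPiq}. The contribution of $d>y^{\varepsilon}$ is negligible thanks to the rapid decay of $1/d^2$. The principal obstacle throughout is maintaining uniformity in $q$: one must avoid any error involving $\log q$ or $\tau(q)$ and instead express all $q$-dependent losses through $L_q$, which is why the Hildebrand--Tenenbaum saddle-point machinery (as calibrated in \cite{LauLiuWu2012}) is needed rather than a crude fundamental-lemma type sieve. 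Specializing the resulting statement to $\kappa=1$ gives exactly \eqref{Xiqyuy} under the stated range \eqref{cond.yq}.
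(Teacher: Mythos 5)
The paper offers no proof of this lemma at all: it is quoted as the special case $\kappa=1$ of Lemma~4.2 of Lau--Liu--Wu \cite{LauLiuWu2012}, which is exactly how you open, so at the level of what the paper actually does your proposal coincides with it. Your supplementary sketch of the underlying argument is the standard route and is broadly sound, but two points need repair. First, the identity $\mu(n)^2=\sum_{d^2\mid n}\mu(d)$ imposes no coprimality between $d$ and $n/d^2$, so the inner count in your first display must be $\Psi_{q}(y^u/d^2,y)$ (friable integers coprime to $q$ only), not $\Psi_{qd^2}(y^u/d^2,y)$; as written the decomposition is false (e.g.\ it assigns weight $1$ to $n=p^3$, which is not squarefree), and the asymptotic you then quote, with main factor $\varphi(q)/q$, is the one for $\Psi_q$, not for $\Psi_{qd^2}$, so the two displays are mutually inconsistent until the subscript is corrected. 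Second, the density factor coming from the coprimality condition really is $\prod_{p\mid q,\,p\le y}(1-1/p)$ rather than $\varphi(q)/q$, the primes $p\mid q$ with $p>y$ being harmless only because $y\ge\exp(2CL_q^{\mathrm{e}+2})$ lets their contribution $1+O(\omega(q)/y)$ be absorbed into the error term --- worth saying explicitly. Finally, note that the whole analytic content of the lemma (the error term $L_q^{\mathrm{e}+2}/\sqrt{\log y}$ uniformly in $q$, with $q$-dependence measured by $L_q=\log(\omega(q)+3)$) is imported wholesale from the quoted uniform estimate, so your sketch is a reduction to a result of essentially the same depth rather than an independent proof; since the paper itself delegates everything to \cite{LauLiuWu2012}, that is consistent with its treatment, but the citation is doing all the work.
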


\vskip 1,5mm

We now introduce an auxiliary multiplicative function $h=h_{N_k, y}$ defined by
\begin{equation}\label{defhy1}
h_{N_k, y}(p)
:=\begin{cases}
0                                 & \text{if $\,p\mid N_k$},
\\\noalign{\vskip 0,5mm}
-2-c\mathcal{L}^{-1}   & \text{if $\,p>y$ and $p\nmid (N/2)$},
\\\noalign{\vskip 0,5mm}
-c\mathcal{L}^{-1}      & \text{if $\,\sqrt{y}<p\le y$ and $\,p\nmid (N/2)$},
\\\noalign{\vskip 0,5mm}
1-c\mathcal{L}^{-1}    & \text{if $\,\mathcal{L}^2\le p\le \sqrt{y}\,$ and $\,p\nmid (N/2)$},
\end{cases}
\end{equation}
and
\begin{equation}\label{defhy2}
h_{N_k, y}(p^\nu) := 0
\qquad
(\nu\ge 2),
\end{equation}
where $\mathcal{L}$ is defined as in \eqref{defLNk} and the constant $c>0$ will be chosen later.

\vskip 1mm

The next lemma is the key for giving a lower bound of $S_{\mathfrak{f}}(x)$.

\begin{lemma}\label{lm-h}
Let $k\ge 1$ be an integer and $N\ge 4$ an integer divisible by $4$.
Then for any $\varepsilon>0$, we have, for $N_k\to\infty$,
\begin{equation}\label{hyn0}
\sum_{n\le y^u } h_{N_k, y}(n) \log\left(\frac{y^u}{n}\right)
= \Pi_{N_k} y^u \big(\rho(2u)-2\log u\big)
\bigg\{1+O\bigg(\frac{1}{\log\mathcal{L}}\bigg)\bigg\}
\end{equation} 
uniformly for
\begin{equation}\label{ConditonuyN}
1\le u\le \tfrac{3}{2}
\qquad\text{and}\qquad
(k^2N)^{1/100}\le y\le (k^2N)^2,
\end{equation}
where $\Pi_{N_k}$ and $\rho(u)$ are defined as in Lemma \ref{squrefreefriablecomprime},
and the implied constant in the $O$-symbol is absolute.
In particular $\rho(2u)-2\log u>0$ for all $u < \kappa$ where $\kappa$ is
the solution to $\rho(2\kappa)=2\log \kappa$. 
We have $\kappa>\tfrac{10}{9}$.
\end{lemma}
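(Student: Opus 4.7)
The plan is to decompose each squarefree $n$ coprime to $N_k$ according to where its prime divisors lie and exploit the fact that $h_{N_k,y}$ is supported on squarefree integers (since $h(p^\nu)=0$ for $\nu\ge 2$). Uniquely write $n = ab\ell$ where $a$ is built from primes in $[\mathcal{L}^2,\sqrt y]$, $b$ from primes in $(\sqrt y,y]$, $\ell$ from primes $>y$ (all coprime to $N/2$). Then
\[
h_{N_k,y}(n) = (1-c/\mathcal{L})^{\omega(a)}\,(-c/\mathcal{L})^{\omega(b)}\,(-2-c/\mathcal{L})^{\omega(\ell)}.
\]
Since $u\le\tfrac{3}{2}$ and each prime of $\ell$ exceeds $y$, the condition $n\le y^u$ forces $\omega(\ell)\le 1$. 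Moreover, every factor of $b$ brings a multiplicative cost $c/\mathcal{L}$, so the total contribution of configurations with $\omega(b)\ge 1$ is $O(\log\mathcal{L}/\mathcal{L})$ relative to the main term, which is absorbed into the stated error $O(1/\log\mathcal{L})$. Thus the main term comes from exactly two configurations: \emph{(A)} $b=\ell=1$, i.e., $n=a$ is squarefree, $\sqrt y$-smooth, coprime to $N_k$; \emph{(B)} $b=1$, $\ell=p$ a single prime, so $n=ap$ with $p>y$, $p\nmid N/2$, and $a$ as in (A) with $a\le y^u/p$.

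For configuration (A), since $\omega(a)\ll\log\log y$ on average, the factor $(1-c/\mathcal{L})^{\omega(a)} = 1+O(\log\log y/\mathcal{L})$ can be pulled out. I then apply Lemma \ref{squrefreefriablecomprime} with $q=N_k$, base $\sqrt y$, and smoothness parameter $2u\in[2,3]$ (writing $y^u=(\sqrt y)^{2u}$), together with Abel summation to convert the $\log(y^u/a)$-weight into an integral against $\Xi_{N_k}(t,\sqrt y)$. Standard manipulation with the Dickman function then gives the contribution $\Pi_{N_k}\,y^u\,\rho(2u)\{1+O((\log\mathcal{L})^{-1})\}$.

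For configuration (B), the constraint $ap\le y^u$ with $p>y$ forces $a\le y^{u-1}\le y^{1/2}$, so $a$ is automatically $\sqrt y$-smooth and Lemma \ref{squrefreefriablecomprime} applies with parameter $2\log_y(y^u/p)\in[0,1]$, on which $\rho\equiv 1$; the inner sum over $a$ is therefore asymptotic to $\Pi_{N_k}(y^u/p)$. Summing over primes $p\in(y,y^u]$ coprime to $N/2$ by the prime number theorem, with change of variable $v=\log p/\log y$, and incorporating the coefficient $-2-c/\mathcal{L}$ and the $\log$-weight, gives
\[
-2\,\Pi_{N_k}\,y^u\int_1^u\frac{dv}{v} = -2\,\Pi_{N_k}\,y^u\,\log u,
\]
with an error of the same quality as in (A). Assembling the two contributions yields the claimed asymptotic. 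The positivity of $\rho(2u)-2\log u$ on $[1,\kappa)$ with $\kappa>\tfrac{10}{9}$ is a direct numerical verification using the closed form $\rho(v)=1-\log v$ on $[1,2]$ and the integral formula $\rho(v) = \rho(2) - \int_2^v (1-\log(w-1))w^{-1}dw$ on $[2,3]$.

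\textbf{Main obstacle.} The delicate technical point is ensuring that the error term $L_q^{\mathrm{e}+2}/\sqrt{\log y}$ from Lemma \ref{squrefreefriablecomprime} survives both the Abel summation against the $\log$-weight and the summation over primes $p$ in (B), and still produces an overall relative error of $O(1/\log\mathcal{L})$. Because $N_k$ collects all primes up to $\mathcal{L}^2$, one has $\omega(N_k)\ll\mathcal{L}^2/\log\mathcal{L}$, so $L_{N_k} = \log(\omega(N_k)+3)\ll\log\mathcal{L}$; the hypothesis $y\ge(k^2N)^{1/100}$ forces $\log y\gg\mathcal{L}$, which is precisely what is required to beat $L_{N_k}^{\mathrm{e}+2}$ and land at the target error $O(1/\log\mathcal{L})$. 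Controlling the $\omega(b)\ge 1$ contributions uniformly to the same precision also requires a careful Rankin-style bound on the associated Dirichlet series, which is the most arithmetic-heavy part of the argument.
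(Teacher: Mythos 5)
Your proposal is correct and follows essentially the same route as the paper: you split $n$ according to prime factors in $[\mathcal{L}^2,\sqrt y\,]$, $(\sqrt y, y]$ and $(y,\infty)$ (your configurations (A), the $\omega(b)\ge 1$ terms, and (B) are precisely the paper's $S_1$, $S_2$, $S_3$), absorb the middle range via the $c\mathcal{L}^{-1}$ factor, evaluate the $\sqrt y$-smooth part with Lemma \ref{squrefreefriablecomprime} and partial summation to get $\rho(2u)$, and treat the single prime $>y$ by Mertens to get $-2\log u$, with the same bookkeeping of the $L_{N_k}^{\mathrm{e}+2}/\sqrt{\log y}$ error. The only small repair needed is in pulling out $(1-c/\mathcal{L})^{\omega(a)}$: one should use the pointwise bound $\omega(a)\ll \mathcal{L}/\log\mathcal{L}$ (every prime factor of $a$ exceeds $\mathcal{L}^2$), giving a factor $1+O(1/\log\mathcal{L})$, rather than your ``$\omega(a)\ll\log\log y$ on average'' justification — this is exactly how the paper argues and it still yields the stated error term.
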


\begin{proof}
According to the definition of $h_{N_k, y}$, we have
\begin{equation}\label{hyn1}
\sum_{n\le y^u} h_{N_k, y}(n) \log\left(\frac{y^u}{n}\right)
= S_1 + O(\mathcal{L}^{-1} S_2) - (2+c\mathcal{L}^{-1}) S_3,
\end{equation}
for all $u$ and $y$ satisfying \eqref{ConditonuyN},
where
\begin{align*}
S_1
& := \sum_{\substack{n\le y^u\\ P(n)\le \sqrt{y}}} h_{N_k, y}(n) \log\left(\frac{y^u}{n}\right), 
\\
S_2 
& := \sum_{\substack{\sqrt{y}<p\le y\\ p\nmid N_k}} \sum_{\substack{n\le y^u/p\\ p\nmid n}} 
h_{N_k, y}(n) \log\left(\frac{y^u}{pn}\right),
\\
S_3 
& := \sum_{\substack{y<p\le y^u\\ p\nmid N_k}} \sum_{n\le y^u/p} 
h_{N_k, y}(n) \log\left(\frac{y^u}{pn}\right).
\end{align*}

For square-free $n\le y^u\le (k^2N)^3$ with $P(n)\le \sqrt{y}$ and $(n, N_k)=1$, we have
\begin{align*}
h_{N_k, y}(n)
& = \bigg(1-\frac{c}{\mathcal{L}}\bigg)^{\omega(n)}
= \exp\bigg\{\omega(n)\log\bigg(1-\frac{c}{\mathcal{L}}\bigg)\bigg\}
\\
& = \exp\bigg\{O\bigg(\frac{1}{\log\mathcal{L}}\bigg)\bigg\}
= 1 + O\bigg(\frac{1}{\log\mathcal{L}}\bigg),
\end{align*}
where the implied constants are absolute.
Thus
\begin{align*}
S_1
& = \bigg\{1 + O\bigg(\frac{1}{\log\mathcal{L}}\bigg)\bigg\} 
\sum_{\substack{n\le y^u, \, (n, N_k)=1, \, P(n)\le \sqrt{y}}} \mu(n)^2 \log\bigg(\frac{y^u}{n}\bigg)
\\
& = \bigg\{1 + O\bigg(\frac{1}{\log\mathcal{L}}\bigg)\bigg\} 
\int_{1-}^{y^u} \log\bigg(\frac{y^u}{t}\bigg) \d \Xi_{N_k}(t, \sqrt{y})
\\\noalign{\vskip 0,5mm}
& = \bigg\{1 + O\bigg(\frac{1}{\log\mathcal{L}}\bigg)\bigg\} 
\int_{1}^{y^u} \frac{\Xi_{N_k}(t, \sqrt{y})}{t} \d t.
\end{align*}
Here the implied constants are absolute.
By Lemma \ref{squrefreefriablecomprime} with $(q, y, y^u) = (N_k, \sqrt{y}, t)$, it follows that
$$
\int_{1}^{y^u} \frac{\Xi_{N_k}(t, \sqrt{y})}{t} \d t
= \bigg\{1+O\bigg(\frac{\log^{\text{e}+2}\!\mathcal{L}}{\sqrt{\mathcal{L}}}\bigg)\bigg\}
\Pi_{N_k} \int_{1}^{y^u} \rho\bigg(\frac{\log t}{\log\sqrt{y}}\bigg) \d t,
$$
where the implied constant is absolute.
By making the change of variables $t=y^{u-v/2}$ and by partial integration, we deduce
\begin{align*}
\int_{1}^{y^u} \rho\bigg(\frac{\log t}{\log\sqrt{y}}\bigg) \d t
& = y^u (\log\sqrt{y}) \int_{0}^{2u} y^{-v/2} \rho(2u-v) \d v
\\
& = y^u \bigg\{\rho(2u) - 
\bigg(\int_{0}^{1/\sqrt{\mathcal{L}}} + \int_{1/\sqrt{\mathcal{L}}}^{2u}\bigg) y^{-v/2} \rho'(2u-v) \d v\bigg\}
\\
& = y^u \rho(2u) \bigg\{1+O\bigg(\frac{1}{\sqrt{\mathcal{L}}}\bigg)\bigg\},
\end{align*}
where the implied constant is absolute.
Combining these estimations, we find that
\begin{equation}\label{S1}
S_1
= \bigg\{1+O\bigg(\frac{1}{\log\mathcal{L}}\bigg)\bigg\}
y^u \Pi_{N_k} \rho(2u),
\end{equation}
where the implied constant is absolute.

Similarly we can write
$$
S_3 = S_3' - S_3'',
$$
where
\begin{align*}
S_3' 
& = \bigg\{1+O\bigg(\frac{1}{\log\mathcal{L}}\bigg)\bigg\}
\sum_{y<p\le y^u} \sum_{\substack{n\le y^u/p\\ (n, N_k)=1}} \mu(n)^2 \log\bigg(\frac{y^u}{pn}\bigg)
\\
& = \bigg\{1+O\bigg(\frac{1}{\log\mathcal{L}}\bigg)\bigg\}
\sum_{y<p\le y^u} 
\int_{1-}^{y^u/p} \log\bigg(\frac{y^u}{pt}\bigg) \d \Xi_{N_k}(t) 
\\
& = \bigg\{1+O\bigg(\frac{1}{\log\mathcal{L}}\bigg)\bigg\}
\sum_{y<p\le y^u} 
\int_{1}^{y^u/p} \frac{\Xi_{N_k}(t)}{t} \d t
\\
& = \bigg\{1+O\bigg(\frac{1}{\log\mathcal{L}}\bigg)\bigg\}
y^u \Pi_{N_k} \sum_{y<p\le y^u} \frac{1}{p}
\\
& = \bigg\{1+O\bigg(\frac{1}{\log\mathcal{L}}\bigg)\bigg\} y^u \Pi_{N_k} \log u,
\end{align*}
and
\begin{align*}
S_3''
& \ll \sum_{\substack{y<p\le y^u\\ p\mid N_k}} 
\sum_{\substack{n\le y^u/p\\ (n, N_k)=1}} \mu(n)^2 \log\bigg(\frac{y^u}{pn}\bigg)
\ll y^u \Pi_{N_k} \sum_{\substack{y<p\le y^u\\ p\mid N_k}} \frac{1}{p}
\\
& \ll y^{u-1} \Pi_{N_k} \omega(N_k)
\ll y^{u} \Pi_{N_k} (\log\mathcal{L})^{-1},
\end{align*}
where we have used the following estimates :
$\omega(N_k)\ll \log N_k\ll \mathcal{L}^2\ll y (\log\mathcal{L})^{-1}$ since $y\ge (k^2N)^{1/100}$.
Here the implied constants are absolute.
These imply that
\begin{equation}\label{S3}
S_3 = \bigg\{1+O\bigg(\frac{1}{\log\mathcal{L}}\bigg)\bigg\} y^u \Pi_{N_k} \log u,
\end{equation}
where the implied constant is absolute.

Finally we have
\begin{equation}\label{S2}
\begin{aligned}
S_2 
& \le \sum_{\sqrt{y}<p\le y} \sum_{n\le y^u/p, \, (n, N_k)=1} \mu(n)^2 \log\bigg(\frac{y^u}{pn}\bigg)
\\
& = \sum_{\sqrt{y}<p\le y} \int_{1-}^{y^u/p} \log\bigg(\frac{y^u}{pt}\bigg) \d \Xi_{N_k}(t)
\\
& = \sum_{\sqrt{y}<p\le y} \int_1^{y^u/p} \frac{\Xi_{N_k}(t)}{t} \d t
\\
& \ll y^u \Pi_{N_k} \sum_{\sqrt{y}<p\le y} \frac{1}{p}
\\
& \ll y^u \Pi_{N_k},
\end{aligned}
\end{equation}
where the implied constants are absolute.

Inserting \eqref{S1}, \eqref{S2} and \eqref{S3} into \eqref{hyn1}, we get \eqref{hyn0}.
\end{proof}

\subsection{Lower bound for $S_{\mathfrak{f}}(x)$}\

\vskip 1mm

From \eqref{Coefficient_Convolution}, the inversion formula of Möbius allows us to deduce
\begin{equation}\label{fFpnu}
{\mathfrak a}_{\mathfrak{f}}^*(n)
= \sum_{d\mid n} \frac{\mu(d) \chi_{t,N}(d)}{\sqrt{d}} \lambda_f\bigg(\frac{n}{d}\bigg).
\end{equation} 
Taking $n=p^{\nu}$, it follows that 
\begin{equation}\label{fFpnu_2}
{\mathfrak a}_{\mathfrak{f}}^*(p^{\nu})
= \lambda_f(p^{\nu}) - \frac{\chi_{t,N}(p)}{\sqrt{p}} \lambda_f(p^{\nu-1}).
\end{equation} 
Thus
\begin{equation}\label{fFpnu_3}
{\mathfrak a}_{\mathfrak{f}}^*(p^{\nu})\ge 0
\; \Leftrightarrow \;
\lambda_f(p^{\nu})\ge \frac{\chi_{t,N}(p)}{\sqrt{p}} \lambda_f(p^{\nu-1})
\; \Rightarrow \;
\lambda_f(p^{\nu})\ge -\frac{\nu}{\sqrt{p}},
\end{equation} 
where we have used the Deligne bound.

Since $f$ is a newform of weight $2k$ for the group $\Gamma_0(N/2)$ with character $\chi^2$, 
for each prime $p\nmid (N/2)$ there is a unique  real $\theta_f(p)\in [0, \pi]$ such that
\begin{equation}\label{Chebyshev}
\lambda_f(p^{\nu}) = \frac{\sin((\nu+1)\theta_f(p))}{\sin\theta_f(p)}
\quad
(\nu\ge 1).
\end{equation} 

\vskip 1mm

Let $y_{\mathfrak{f}}>0$ be the largest integer such that
\begin{equation}\label{HPositivity}
{\mathfrak a}_{\mathfrak{f}}^*(n)\ge 0
\qquad\text{for}\quad
n\le y_{\mathfrak{f}}
\quad\text{and}\quad
(n, N/2)=1.
\end{equation}
We now proceed to establish a lower bound for $S_{\mathfrak{f}}(x)$ 
by using the assumption of positivity \eqref{HPositivity}. 
For primes $\mathcal{L}^2\le p\le y_{\mathfrak{f}}$ with $p\nmid (N/2)$, 
we thus have ${\mathfrak a}_{\mathfrak{f}}(tp^2)\geq 0$.
From \eqref{fFpnu_3} and \eqref{Chebyshev} with $\nu=1$, it follows that
$$
\lambda_f(p) = \frac{\sin(2\theta_f(p))}{\sin\theta_f(p)}
\ge -\frac{1}{\sqrt{p}}
\ge -\frac{2}{\mathcal{L}}\cdot
$$
Hence, there is an absolute positive constant $c_1>0$ 
such that $\theta_f(p)\le \tfrac{\pi}{2}+\tfrac{c_1}{\mathcal{L}}$.
Furthermore, if $\mathcal{L}^2\le p\le \sqrt{y_{\mathfrak{f}}}$ and $p\nmid (N/2)$, 
we have
$$
\lambda_f(p^2) = \frac{\sin(3\theta_f(p))}{\sin\theta_f(p)}
\ge -\frac{2}{\sqrt{p}}
\ge -\frac{2}{\mathcal{L}}\cdot
$$
This implies that $\theta_f(p)\le \tfrac{\pi}{3}+\tfrac{c'}{\mathcal{L}}$ and
$$
\lambda_f(p)\ge 2\cos\bigg(\frac{\pi}{3}+\frac{c'}{\mathcal{L}}\bigg)
\ge 1-\frac{c}{2\mathcal{L}},
$$
where $c>0$ is an absolute positive constant.

In view of \eqref{fFpnu_2} with $\nu=1$ and the definition of $h_{N_k, y}(p)$, it is clear that
\begin{equation}\label{af_hNy}
\mathfrak{a}_{\mathfrak{f}}^*(p)\ge h_{N_k, y_{\mathfrak{f}}}(p)
\end{equation}
for all prime numbers $p\nmid N_k$.

\begin{proposition}\label{prop2}
Let $k\ge 1$ be an integer, $N\ge 4$ an integer divisible by $4$
and $\chi$ be a real Dirichlet character modulo $N$.
Suppose that ${\mathfrak{f}}\in \mathcal{S}_{k+1/2}^*(N, \chi)$ is a Hecke eigenform and 
$t\ge 1$ is a square-free integer such that ${\mathfrak a}_{\mathfrak{f}}(t)>0$.
Assume that its Shimura lift is not of CM type
and that $(k^2N)^{1/100}\le y_{\mathfrak{f}}\le (k^2N)^{2}$. Then we have
\begin{equation}\label{LB_Sfx}
S_{\mathfrak{f}}(y_{\mathfrak{f}}^u)\gg y_{\mathfrak{f}}^u (\log_2(kN))^{-1}
\end{equation}
for all $u<\kappa$,
where $y_{\mathfrak{f}}$ and $\kappa$ are defined as in \eqref{HPositivity} and  Lemma \ref{lm-h}, respectively,
and $\log_r$ means the $r$-fold iterated logarithm.
Here the implied constant is absolute.
\end{proposition}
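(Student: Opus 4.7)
The plan is to exploit the pointwise lower bound $\mathfrak{a}_{\mathfrak{f}}^{*}(p)\geq h_{N_{k},y_{\mathfrak{f}}}(p)$ established in \eqref{af_hNy} by comparing $S_{\mathfrak{f}}(y_{\mathfrak{f}}^{u})$ against the explicit mean value of Lemma \ref{lm-h} via a Dirichlet convolution trick.  I extend $H$ and $G$ multiplicatively on squarefree integers coprime to $N_{k}$ by $H(p):=h_{N_{k},y_{\mathfrak{f}}}(p)$ and $G(p):=\mathfrak{a}_{\mathfrak{f}}^{*}(p)-h_{N_{k},y_{\mathfrak{f}}}(p)\geq 0$, both vanishing on prime squares.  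Expanding $\mathfrak{a}_{\mathfrak{f}}^{*}(n)=\prod_{p\mid n}[h_{N_{k},y_{\mathfrak{f}}}(p)+G(p)]$ for squarefree $n$ coprime to $N_{k}$ gives the convolution identity $\mathfrak{a}_{\mathfrak{f}}^{*}\mu^{2}\mathbf{1}_{(\cdot,N_{k})=1}=H\ast G$.  Substituting into \eqref{defSfx} and separating by the factor coming from $G$ yields
\[
S_{\mathfrak{f}}(y_{\mathfrak{f}}^{u})=\sumb_{e\leq y_{\mathfrak{f}}^{u}}G(e)\,T\!\bigl(y_{\mathfrak{f}}^{u}/e;\,e\bigr),\qquad T(X;e):=\sumb_{\substack{d\leq X\\(d,e)=1}}H(d)\log(X/d).
\]

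The main term is the $e=1$ contribution, which is exactly the sum evaluated by Lemma \ref{lm-h} with $y=y_{\mathfrak{f}}$ and equals $\Pi_{N_{k}}\,y_{\mathfrak{f}}^{u}(\rho(2u)-2\log u)\{1+O(1/\log\mathcal{L})\}$.  The hypothesis $u<\kappa$ forces $\rho(2u)-2\log u>0$, and Mertens' theorem gives $\Pi_{N_{k}}\asymp 1/\log\mathcal{L}\asymp 1/\log_{2}(kN)$, so this single term is already $\gg y_{\mathfrak{f}}^{u}/\log_{2}(kN)$, matching \eqref{LB_Sfx}.  To finish I would show that the $e\geq 2$ terms do not erase this contribution.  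For each squarefree $e\geq 2$ coprime to $N_{k}$ with $y_{\mathfrak{f}}^{u}/e$ lying in the applicable range of Lemma \ref{lm-h}, repeating the proof of that lemma with the extra coprimality $(d,e)=1$ (which just replaces $\Pi_{N_{k}}$ by $\Pi_{N_{k}e}=\Pi_{N_{k}}\prod_{p\mid e}p/(p+1)$) produces the asymptotic $T(y_{\mathfrak{f}}^{u}/e;e)\sim \Pi_{N_{k}e}(y_{\mathfrak{f}}^{u}/e)(\rho(2u_{e})-2\log u_{e})$ with $u_{e}:=u-\log e/\log y_{\mathfrak{f}}$.  The key observation is that $\rho(2v)-2\log v>0$ for every $v\in(0,\kappa)$ (this follows from $\rho(2v)\to 1$ as $v\to 0^{+}$, the monotonicity of $\rho$, and the definition of $\kappa$), so $T(y_{\mathfrak{f}}^{u}/e;e)\geq 0$ asymptotically for every relevant $e$, and since $G(e)\geq 0$ the full sum is bounded below by the $e=1$ contribution modulo lower-order terms.

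The main obstacle is the boundary analysis: for $e$ so large that $y_{\mathfrak{f}}^{u}/e$ falls below $(k^{2}N)^{1/100}$, where Lemma \ref{lm-h} is no longer directly applicable, one has to bound $T(y_{\mathfrak{f}}^{u}/e;e)$ directly, using the Deligne-type bounds $|H(p)|\leq 2+c/\mathcal{L}$ and $G(p)\leq 4$ together with the friable sieve \eqref{Xiqyuy} to which the proof of Lemma \ref{lm-h} reduces.  The restriction $u<\kappa<3/2<2$ is crucial here: every squarefree $e\leq y_{\mathfrak{f}}^{u}$ can have at most one prime factor exceeding $y_{\mathfrak{f}}$, which limits the boundary contribution enough that it is absorbed into the $\{1+O(1/\log\mathcal{L})\}$ error of the main asymptotic.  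Carefully splitting the $e$-sum into the $y_{\mathfrak{f}}$-friable part and the single-large-prime extension, and summing termwise against $G(e)$, then yields the stated lower bound \eqref{LB_Sfx}.
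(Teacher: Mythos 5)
Your proposal is correct and follows essentially the same route as the paper: the paper likewise writes $\mathfrak{a}_{\mathfrak{f}}^*\mu^2$ as a Dirichlet convolution $g_{N_k,y_{\mathfrak{f}}}*h_{N_k,y_{\mathfrak{f}}}$ with $g_{N_k,y_{\mathfrak{f}}}(p)=\mathfrak{a}_{\mathfrak{f}}^*(p)-h_{N_k,y_{\mathfrak{f}}}(p)\ge 0$, discards the terms with $d\ge 2$ by nonnegativity, and concludes from Lemma \ref{lm-h} together with $\Pi_{N_k}\gg(\log_2(kN))^{-1}$. The only difference is presentational: you make explicit the coprimality condition and the nonnegativity of the inner sums for $e\ge 2$ (via the positivity of $\rho(2v)-2\log v$ on $(0,\kappa)$ and the boundary ranges), steps the paper simply asserts when it keeps only the $d=1$ term.
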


\begin{proof}
With the help of \eqref{multiplicativity}, it is easy to verify that $n\mapsto \mathfrak{a}_{\mathfrak{f}}^*(n)$ is multiplicative.
Let $g_{N_k, y_{\mathfrak{f}}}$ be the multiplicative function defined by the
Dirichlet convolution identity $\mathfrak{a}_{\mathfrak{f}}^*\mu^2=g_{N_k, y_{\mathfrak{f}}}*h_{N_k, y_{\mathfrak{f}}}$. 
Then $g_{N_k, y_{\mathfrak{f}}}(n)\geq 0$ for all square-free integers $n\geq 1$ with $(n, \, N_k)=1$, since
$g_{N_k, y_{\mathfrak{f}}}(p)=\mathfrak{a}_{\mathfrak{f}}^*(p)-h_{N_k, y_{\mathfrak{f}}}(p)\geq 0$ for $p\nmid N_k$
thanks to \eqref{af_hNy}.

On the other hand, according to Lemma \ref{lm-h}, we have
$$
\sum_{n\leq y_{\mathfrak{f}}^u}{h_{N_k, y_{\mathfrak{f}}}(n)} \log\left(\frac{y_{\mathfrak{f}}^u}{n}\right)
\geq 0
$$
for $u<\kappa$ and sufficiently large $y_{\mathfrak{f}}$.
But, as $g_{N_k,y_{\mathfrak{f}}}(1)=1$, 
we infer that
\begin{align*}
S_{\mathfrak{f}}(y_{\mathfrak{f}}^u)
& = \sumb_{n\le y_{\mathfrak{f}}^u} g_{N_k, y_{\mathfrak{f}}}*h_{N_k, y_{\mathfrak{f}}}(n) \log\left(\frac{y_{\mathfrak{f}}^u}{n}\right)
\\
& = \sumb_{d\le y_{\mathfrak{f}}^u} g_{N_k, y_{\mathfrak{f}}}(d)
\sumb_{m\leq y_{\mathfrak{f}}^u/d} h_{N_k, y_{\mathfrak{f}}}(m) \log\left(\frac{y_{\mathfrak{f}}^u}{dm}\right)
\\
& \geq \sumb_{m\leq y_{\mathfrak{f}}^u} h_{N_k, y_{\mathfrak{f}}}(m) \log\left(\frac{y_{\mathfrak{f}}^u}{m}\right).
\end{align*}
Then we get the required lower bound \eqref{LB_Sfx} by Lemma \ref{lm-h} ,
since we have, by (\ref{defPiq}) and the prime number theorem,
$$
\Pi_{N_k}
\gg \frac{\varphi(N_k)}{N_k}
\gg (\log_2N_k)^{-1}
\gg \{\log_2(kN)\}^{-1},
$$
where the implied constant is absolute.
This completes the proof.
\end{proof}

\subsection{End of the proof of Theorem \ref{thm1}}\

\vskip 1mm

Without loss of generality, we can assume that $y_{\mathfrak{f}}\ge (k^2N)^{1/100}$.

Firstly we prove
\begin{equation}\label{UB:First}
y_{\mathfrak{f}}\ll (k^2N)^2,
\end{equation}
where the implied constant is absolute.

By the definition of $y_{\mathfrak{f}}$, the multiplicativity of $\mathfrak{a}_{\mathfrak{f}}^*(n)$ and \eqref{af_hNy}, we have
\begin{align*}
S_{\mathfrak{f}}(y_{\mathfrak{f}})
& \ge \sum_{\substack{\mathcal{L}^2\le p\not=p'\le y_{\mathfrak{f}}^{1/3}\\ (pp', N/2)=1}} 
\mathfrak{a}_{\mathfrak{f}}^*(pp') \log\bigg(\frac{y_{\mathfrak{f}}}{pp'}\bigg)
\\
& \ge 3(\log y_{\mathfrak{f}}) \sum_{\substack{\mathcal{L}^2\le p\not=p'\le y_{\mathfrak{f}}^{1/3}\\ (pp', N/2)=1}} 
\Big(1-\frac{c}{2\mathcal{L}}\Big)^{2}
\\
& \gg (\log y_{\mathfrak{f}})
\bigg\{\Big(\sum_{\substack{\mathcal{L}^2\le p\le y_{\mathfrak{f}}^{1/3}\\ (p, N/2)=1}} 1\Big)^2
- \sum_{p\le y_{\mathfrak{f}}^{1/3}} 1\bigg\}
\\
& \gg y_{\mathfrak{f}}^{2/3},
\end{align*}
where the implied constant is absolute.
Combining this with Proposition \ref{prop1} yields
$$
y_{\mathfrak{f}}^{2/3}
\ll S_{\mathfrak{f}}(y_{\mathfrak{f}})
\ll_{\varepsilon} (k^2N)^{1/4+\varepsilon} y_{\mathfrak{f}}^{1/2}
$$
where the first implied constant is absolute and the second depends only on $\varepsilon$.
Clearly these imply the required inequality \eqref{UB:First}.

In view of \eqref{UB:First}, 
we can apply Propositions \ref{prop1} and \ref{prop2} to write
\begin{equation}\label{proof:thm1_A}
y_{\mathfrak{f}}^u (\log\mathcal{L})^{-1}
\ll S_{\mathfrak{f}}(y_{\mathfrak{f}}^u)
\ll_{\varepsilon} (k^2N)^{1/4+\varepsilon} y_{\mathfrak{f}}^{u/2}
\end{equation}
for $u<\kappa$, 
where the first implied constant is absolute and the second depends only on $\varepsilon$.
From \eqref{proof:thm1_A} we deduce that $y_{\mathfrak{f}}\ll_{\varepsilon} (k^2N)^{1/(2u)+\varepsilon}$ for $1\le u<\kappa$.
According to Lemma \ref{lm-h}, we know $\kappa>\tfrac{10}{9}$.
Thus $y_{\mathfrak{f}}\ll (k^2N)^{9/20}$, where the implied constant is absolute.
This is equivalent to the result of Theorem \ref{thm1}.

\vskip 10mm

\section{Proof of Theorem \ref{thm2}}

\smallskip

Consider the function
$$
\varepsilon_{\mathfrak{f}}(n)
= \text{sign}\,{\mathfrak a}_{\mathfrak{f}}(tn^2)
:= \begin{cases}
1 & \text{si $\, {\mathfrak a}_{\mathfrak{f}}(tn^2)>0$},
\\
-1 & \text{si $\, {\mathfrak a}_{\mathfrak{f}}(tn^2)<0$},
\\
0 & \text{si $\, {\mathfrak a}_{\mathfrak{f}}(tn^2)=0$}.
\end{cases}
$$
By using \eqref{multiplicativity}, it is easy to check that this function is multiplicative.
According to \cite[Theorem]{HallTenenbaum1991},
for any real multiplicative function such that $|g(n)|\leq 1$, the inequality
$$
\sum_{n\leq x} g(n)\ll x \exp\bigg(-K\sum_{p\leq x} \frac{1-g(p)}{p}\bigg)
$$
holds for all $x\ge 2$,
where $K = 0.32867 \dots = -\cos\phi_0$ and 
$\phi_0$ is the unique root in $(0, \pi)$ of the equation $\sin\phi-\phi\cos\phi=\frac{1}{2}\pi$.
Applying this result to $\varepsilon_{\mathfrak{f}}(n)$, it follows that
\begin{align*}
\sum_{n\leq x} \varepsilon_{\mathfrak{f}}(n)
& \ll x \exp\bigg(-K\sum_{p\leq x} \frac{1-\varepsilon_{\mathfrak{f}}(p)}{p}\bigg)
\\
& \ll x \exp\bigg(-K\sum_{p\leq x, \, \varepsilon_{\mathfrak{f}}(p)=-1} \frac{2}{p}\bigg).
\end{align*}
With the help of Lemma \ref{lem2.2}, a simple integration by parts gives us
$$
\sum_{p\leq x, \, \varepsilon_{\mathfrak{f}}(p)=-1} \frac{1}{p}
= \int_{2-}^x \frac{1}{t} \d \Big(\sum_{p\leq t, \, \varepsilon_{\mathfrak{f}}(p)=-1} 1\Big)
\ge \bigg(\frac{1}{2}-\varepsilon\bigg) \log_2x
$$
for any $\varepsilon>0$ and all $x\ge x_0({\mathfrak{f}}, \varepsilon)$.

Combining these two estimates, we find that 
\begin{equation}\label{thm1.EqA}
\sum_{n\leq x} \varepsilon_{\mathfrak{f}}(n)
\ll_{{\mathfrak{f}}, \varepsilon} \frac{x}{(\log x)^{K-\varepsilon}}
\end{equation}
for any $\varepsilon>0$ and all $x\ge 2$.

On the other hand, a particular case of \cite[Theorem 2]{LiuWu2015} can be stated as follows:
Let $h$ be a non-negative multiplicative function satisfying the following conditions 
\begin{align}
& \sum_{p\leq z} h(p)\log p = \kappa z + O\bigg(\frac{z}{(\log z)^{\delta}}\bigg) \quad (z\geq 2),
\label{TWcondition1}
\\
& \sum_{p, \, \nu\geq 2} \frac{h(p^\nu)}{p^\nu} \log p^\nu\leq A,
\label{TWcondition2}
\end{align}
where $A>0$, $\kappa>0$ and $\delta>0$ are constants.
Then we have
\begin{equation}\label{AsymptoticSfx}
\sum_{n\leq x} h(n) = C_h x(\log x)^{\kappa-1} \bigg\{1
+ O_{h, \delta}\bigg(\frac{\log_2x}{\log x} + \frac{1}{(\log x)^{\delta}}\bigg)\bigg\},
\end{equation}
where
$$
C_h := \prod_p \big(1-p^{-1}\big)^\kappa 
\sum_{\nu\geq 0} g(p^{\nu}) p^{-\nu}.
$$
In view of Lemma \ref{lem2.3} and the prime number theorem, we immediately see that
$$
\sum_{p\leq z} |\varepsilon_{\mathfrak{f}}(p)|\log p 
= z + O_{\mathfrak{f}, \varepsilon}\big(z (\log z)^{-1/4+\varepsilon}\big) \quad (z\geq 2).
$$
This shows that the non-negative multiplicative function $n\mapsto |\varepsilon_{\mathfrak{f}}(n)|$ 
satisfies the condition \eqref{TWcondition1}.
Since $|\varepsilon_{\mathfrak{f}}(n)|\le 1$ for all $n\ge 1$,
the condition \eqref{TWcondition2} is verified trivially.
Thus \eqref{AsymptoticSfx} implies that 
\begin{equation}\label{thm1.EqB}
\sum_{n\leq x} |\varepsilon_{\mathfrak{f}}(n)| 
= \rho_{\mathfrak{f}} \, x 
\big\{1 + O_{{\mathfrak{f}}, \varepsilon}\big((\log x)^{-1/4+\varepsilon}\big)\big\}.
\end{equation}
This is equivalent to the assertion (i).

Notice that
$$
\frac{|\varepsilon_{\mathfrak{f}}(n)| + \varepsilon_{\mathfrak{f}}(n)}{2}
= \begin{cases}
1 & \text{if $\varepsilon_{\mathfrak{f}}(n)=1$},
\\\noalign{\vskip 1mm}
0 & \text{otherwise}.
\end{cases}
$$
The estimates \eqref{thm1.EqA} and \eqref{thm1.EqB} imply that
\begin{equation}\label{thm1.EqC}
\sum_{\substack{n\leq x\\ \varepsilon_{\mathfrak{f}}(n)=1}} 1 
= \tfrac{1}{2} \rho_{\mathfrak{f}} \, x 
\big\{1 + O_{{\mathfrak{f}}, \varepsilon}\big((\log x)^{-1/4+\varepsilon}\big)\big\}.
\end{equation}
This is equivalent to the assertion (ii) with sign +.
The other case can be treated in the same way.

\vskip 8mm

\section{Proof of Theorem \ref{thm3}}

In order to generalize the square-free numbers, 
Erd\H os \cite{Erdos1966} introduced the notion of $\mathscr{B}$-free numbers. 
More precisely, let 
$$
\mathscr{B}=\{b_i \,:\, 1<b_1<b_2<\dots\,\}
$$
be a sequence of integers verifying the following conditions
\begin{equation}\label{Bhypothese1}
\sum_{i\ge 1} b_i^{-1}<\infty
\end{equation}
and
\begin{equation}\label{Bhypothese2}
(b_i,b_j)=1
\quad
(i\not=j).
\end{equation}
A positive integer $n\geq 1$ is called $\mathscr{B}$-free
if it is not divisible
by any element in $\mathscr{B}$.
Many authors studied the distribution of $\mathscr{B}$-free integers.
A detailed historical description can be found in \cite{KowalskiRobertWu2007, WuZhai2013}.
In particular, by using sieve and estimates for multiple exponential sums, 
the authors of these two papers proved the following results 
(see \cite[Corollary 10]{KowalskiRobertWu2007} and \cite[Proposition 2]{WuZhai2013}, respectively):
\par
$\bullet$
\textsl{For all $\varepsilon>0$, $x\ge x_0(\mathscr{B}, \varepsilon)$ and $y\ge x^{7/17+\varepsilon}$,
we have
\begin{equation}\label{krw}
|\{x<n\le x+y\,:\,
n \;\hbox{is}\;\mathscr{B}\hbox{-free}\}|
\gg_{\mathscr{B},\varepsilon} y.
\end{equation}}

$\bullet$
\textsl{For all $\varepsilon>0$,
$x\ge x_0(\mathscr{B}, \varepsilon)$,
$y\ge x^{17/38+100\varepsilon}$,
$1\le a\le q\le x^\varepsilon$ with $((a, q), b)=1$
for all $b\in \mathscr{B}$, we have
\begin{equation}\label{wz}
|\{x<n\le x+y\,:\,
n\equiv a\,({\rm mod}\, q)
\;\hbox{and}\; n \;\hbox{is}\;\mathscr{B}\hbox{-free}\}|
\gg_{\mathscr{B},\varepsilon} y/q.
\end{equation}}
Here the implied constants depend only on $\mathscr{B}$ and $\varepsilon$.

Now let $\mathscr{P}$ be the set of all primes and define
$$
\mathfrak{P}_{\mathfrak{f}} 
:= \{p\in \mathscr{P} \,:\, {\mathfrak a}_{\mathfrak{f}}(tp^2)=0\},
\qquad
\mathscr{B}_{\mathfrak{f}}
:= \mathfrak{P}_{\mathfrak{f}}\cup \{p^2 \,:\, p\in \mathscr{P}\sset\mathfrak{P}_{\mathfrak{f}}\}.
$$
Clearly if $n$ is $\mathscr{B}_{\mathfrak{f}}$-free, 
then $n$ is square-free and by all its prime factors are not in $\mathfrak{P}_{\mathfrak{f}}$.
By using \eqref{multiplicativity}, it is easy to see that
${\mathfrak a}_{\mathfrak{f}}(tn^2)\not=0$ for all $\mathscr{B}_{\mathfrak{f}}$-free numbers $n$.
Thus \eqref{krw} and \eqref{wz} imply the first and second assertions of Theorem \ref{thm3}, respectively,
if we can show that the sequence $\mathscr{B}_{\mathfrak{f}}$ verifies the conditions 
\eqref{Bhypothese1} and \eqref{Bhypothese2}.

Firstly, the definition of $\mathscr{B}_{\mathfrak{f}}$ guarantees that 
the condition \eqref{Bhypothese2} is satisfied by $\mathscr{B}_{\mathfrak{f}}$.
On the other hand, in view of \eqref{defaft}, we have
$$
a_{f_t}(p)
= {\mathfrak a}_{\mathfrak{f}}\big(tp^2\big)
+ \chi_{t,N}(p) {\mathfrak a}_{\mathfrak{f}}(t) p^{k-1}.
$$
Thus \eqref{SerreNewform} of Lemma \ref{lem2.3} implies that
$$
\sum_{\substack{p\le x\\ {\mathfrak a}_{\mathfrak{f}}(tp^2)=0}} 1
= \sum_{\substack{p\le x\\ a_{f_t}(p)=\chi_{t,N}(p) {\mathfrak a}_{\mathfrak{f}}(t) p^{k-1}}} 1
\ll_{\mathfrak{f}} \frac{x}{(\log x)^{5/4-\varepsilon}},
$$
where the implied constant depends on $\mathfrak{f}$.
Hence a simple integration by parts gives us
\begin{align*}
\sum_{p\le x, \, p\in \mathfrak{P}_{\mathfrak{f}}} \frac{1}{p}
& = \int_{2-}^x \frac{1}{t} \d\Big(\sum_{p\le t, \, {\mathfrak a}_{\mathfrak{f}}(tp^2)=0} 1\Big)
\\\noalign{\vskip -1mm}
& = \frac{1}{x} \sum_{p\le x, \, {\mathfrak a}_{\mathfrak{f}}(tp^2)=0} 1
+ \int_{2}^x \frac{1}{t^2} \Big(\sum_{p\le t, \, {\mathfrak a}_{\mathfrak{f}}(tp^2)=0} 1\Big) \d t
\\\noalign{\vskip -1mm}
& \ll_{\mathfrak{f}} \frac{1}{(\log x)^{5/4-\varepsilon}} 
+ \int_{2}^x \frac{\d t}{t(\log t)^{5/4-\varepsilon}} 
\\\noalign{\vskip 2,5mm}
& \ll_{\mathfrak{f}} 1,
\end{align*}
where the implied constant depends on $\mathfrak{f}$.
This implies that the sequence $\mathscr{B}_{\mathfrak{f}}$ verifies the condition \eqref{Bhypothese1}.

\vskip 10mm

\end{document}